\documentclass[10pt,a4paper]{article}
\usepackage[margin=1.95cm]{geometry}
\usepackage{amsmath,amsthm,amsfonts,amssymb,amscd,cite,graphicx,float}
\usepackage{caption,subcaption}
\usepackage{times}
\usepackage{amssymb,amsthm,latexsym,amsmath,epsfig,pgf}
\usepackage{lineno}
\usepackage{lineno,hyperref}
\usepackage{lipsum,fancyhdr,datetime}
\usepackage{wrapfig}
\usepackage{lscape}
\usepackage{rotating}
\usepackage{epstopdf}
\newtheorem{theorem}{Theorem}[section]
\newtheorem{lemma}{Lemma}[section]
\newtheorem{definition}{Definition}[section]

\theoremstyle{remark}

\theoremstyle{remark}

\modulolinenumbers[1]

\pagestyle{fancy}

\pagestyle{fancy}
\begin{document}
\begin{center}
 {\large \bf Odd sun-free Triangulated Graphs are $S$-perfect}   

\noindent
\\

\noindent
G. Ravindra, Sanghita Ghosh
, Abraham V. M.\\

\noindent
\footnotesize {\it Department of Mathematics, CHRIST (Deemed to be University), Bengaluru, India.}\\
\footnotesize {\it gravindra49@gmail.com, sanghita.ghosh@res.christuniversity.in, frabraham@christuniversity.in}
\noindent
\end{center}

\noindent

\setcounter{page}{1} \thispagestyle{empty}

\baselineskip=0.20in

\normalsize

 \begin{abstract}
For a graph $G$ with the vertex set $V(G)$ and the edge set $E(G)$ and a star subgraph $S$ of $G$, let $\alpha_S(G)$ be the maximum number of vertices in $G$ such that no two of them are in the same star subgraph $S$ and $\theta_S(G)$ be the minimum number of star subgraph $S$ that cover the vertices of $G$. A graph $G$ is called $S$-perfect if for every induced subgraph $H$ of $G$, $\alpha_S(H)=\theta_S(H)$. Motivated by perfect graphs discovered by Berge, Ravindra introduced $S$-perfect graphs. In this paper we prove that a triangulated graph is $S$-perfect if and only if $G$ is odd sun-free. This result leads to a conjecture which if proved is a structural characterization of $S$-perfect graphs in terms of forbidden subgraphs.

 \noindent
 {\bf Keywords:} $S$-perfect graphs;  extended sun graph; sun graph \\[2mm]
 {\bf 2020 Mathematics Subject Classification:} 05C17, 05C75.
 \end{abstract}

\baselineskip=0.20in

\section{Introduction}
\par All graphs $G = (V, E)$ in this paper are finite and simple with the vertex set $V (G)$ and the edge set $E(G)$. Inspired by perfect graphs due to Berge\cite{berge}, the concept of $ F $-perfect graphs was introduced by Ravindra in 2011 \cite{taiwan}. For the terminology and notations which are not defined here, we refer the readers to West \cite{west2017}.
\par A graph $G$ is said to be \textit{triangulated} if $G$ has no induced cycle of length at least 4. If $D\subseteq V(G)$, the subgraph of $G$ induced by $D$ is obtained by deleting the vertices of $V(G)-D$ and is denoted by $G[D]$. Given a graph $H$, we say that a graph $G$ is $H$-free, if $G$ does not contain an induced subgraph isomorphic to $H$. Given a family $\{H_1, H_2, ... \}$ of graphs, we say that $G$ is $(H_1, H_2, ...)$-free if $G$ is $H_i$ – free, for every $i \geq 1$. 
\par A \textit{star} is a tree consisting of one vertex adjacent to all other vertices.
Note that $K_1$ and $K_2$ are stars.
\par A vertex $v$ in $G$ is called an \textit{simplicial vertex} if $v$ belongs to only one maximal clique of $G$. A clique of $G$ is \textit{free} if it contains at least one simplicial vertex. A triangle is \textit{free triangle} if it has only one simplicial vertex.
\par Let $G$ be a graph. An \textit{$\mathbb{S}$}-\textit{cover} of $G$ is a family of stars contained in $G$ such that every vertex of $G$ is in one of the stars. The \textit{S}-\textit{covering number} of $G$ is the minimum cardinality of a $\mathbb{S}$-cover and is denoted by $\theta_S(G)$. A \textit{$\theta_S$-cover} of $G$ is an $\mathbb{S}$-cover of $G$ containing $\theta_S(G)$ stars. 

\par A set $T\subseteq V(G)$ is an \textit{$S$-independent} set of $G$ if no two vertices of $T$ are contained in the same star in $G$ (equivalently, any two vertices in $T$ are at a distance at least 3). The maximum cardinality of an $S$-independent set is called the \textit{$S$-independence number} of $G$ and is denoted by $\alpha_S(G)$. An \textit{$\alpha_S$- independent set} of $G$ is a $S$-independent set with $\alpha_S(G)$ vertices.
\par It is easy to see that the graph parameters $\alpha_S(G)$ and $\theta_S(G)$ both satisfy the additive property, that is
\begin{itemize}
    \item $\alpha_S(\bigcup_{i=1}^{k}G_i)=\sum_{i=1}^{k} \alpha_S(G_i)$, and
    \item $\theta_S(\bigcup_{i=1}^{k}G_i)=\sum_{i=1}^{k} \theta_S(G_i)$.
\end{itemize}
\par It is also interesting to observe that the parameters $\alpha(G)$ and $\theta(G)$ of Berge perfect graphs satisfy monotone property, that is if $H$ is an induced subgraph of $G$, then $\alpha(H)\leq \alpha(G)$ and $\theta(H)\leq\theta(G)$. However, the parameters $\alpha_S(G)$ and $\theta_S(G)$ do not satisfy monotone property. That is, if $H$ is an induced subgraph of $G$, $\alpha_S(H)$ (or $\theta_S(H)$) may exceed $\alpha_S(G)$ (or $\theta_S(G)$). For example if $G=K_{1,n}, n\geq 2$ with central vertex $v_0$, then $\alpha_S(K_{1,n})=1$ and $\theta_S(K_{1,n})=1$. However $\alpha_S(K_{1,n}-v_0)=n$ and $\theta_S(K_{1,n}-v_0)=n, n\geq 2$.
\par For any graph $G$, $\theta_S(G) \geq \alpha_S(G)$. So, if for some graph $H$, $\theta_S(H)\leq\alpha_S(H)$, then $\alpha_S(H)=\theta_S(H)$.
\par Now we examine the graphs with the property that $\alpha_S(H)=\theta_S(H)$ for every induced subgraph $H$ of $G$ and call such graphs $S$-perfect graphs. We formally present the definition of \textit{$S$-perfect graphs}.
\begin{definition}
A graph is called $S$-perfect if $ \theta_{S}(H)=\alpha_{S}(H) $, for every induced subgraph $H$ of $G$.
\end{definition}

\par A graph $G$ is said to be \textit{minimal $S$-imperfect}  if (i) $\theta_S(G) \neq \alpha_S(G)$ and (ii) $G-v$ is $S$-perfect for every vertex $v$ of $G$. Every minimal $S$-imperfect graph is a connected graph, since a graph $G$ is $S$-perfect if and only if every component of $G$ is $S$-perfect. If $G$ is not $S$-perfect, now onwards we may assume that $G$ is minimal $S$-imperfect.
 \par Since $S$-perfectness is a hereditary property, one expects a forbidden subgraph characterization for $S$-perfect graphs. We realize this in this paper and prove a characterization theorem for triangulated $S$-perfect graphs.
\par The property that the parameters $\alpha_S$ and $\theta_S$ do not satisfy monotone property causes some difficulty in some of the results related to their equality.
\par Before we state the characterization theorem for triangulated $S$-perfect graphs, we define \textit{sun graphs}.
 
\begin{definition}\label{extendedsungraph}
Let $H$ be a graph with a Hamiltonian cycle $C=v_1 v_2\dots v_k v_1$. Let $A_i, 1\leq i \leq k$ be mutually disjoint complete graphs such that $|A_i|\geq 1$, $\forall$ $i$ such that $1\leq i \leq k$. Let $H_1$ be a graph constructed from $H$ such that every vertex in $A_i$ is adjacent to $v_i$ and $v_{i+1}$ and all $u_i$s in $A_i$ are simplicial in $H_1$ ($i's$ are taken modulo $k$). $H_1$ is called $k$-extended sun. $H_1$ is an extended odd (even) sun if $k$ is odd (even). If $|A_i|=1$, for all $i$ then $H_1$ is $k$-sun graph.
$H_1$ is an odd (even) sun if $k$ is odd (even).
\end{definition}

Obviously, a sun is an extended sun. A 3-sun is contained in every $k$-extended sun. For example in Figure \ref{fig:sungraphs} we see\linebreak $[\{\{A_1\}, v_1, v_2, v_5, v_4,\{A_5\}\}]$=3-sun.
\begin{figure}[h]
            \centering
            \includegraphics[width=0.65\linewidth]{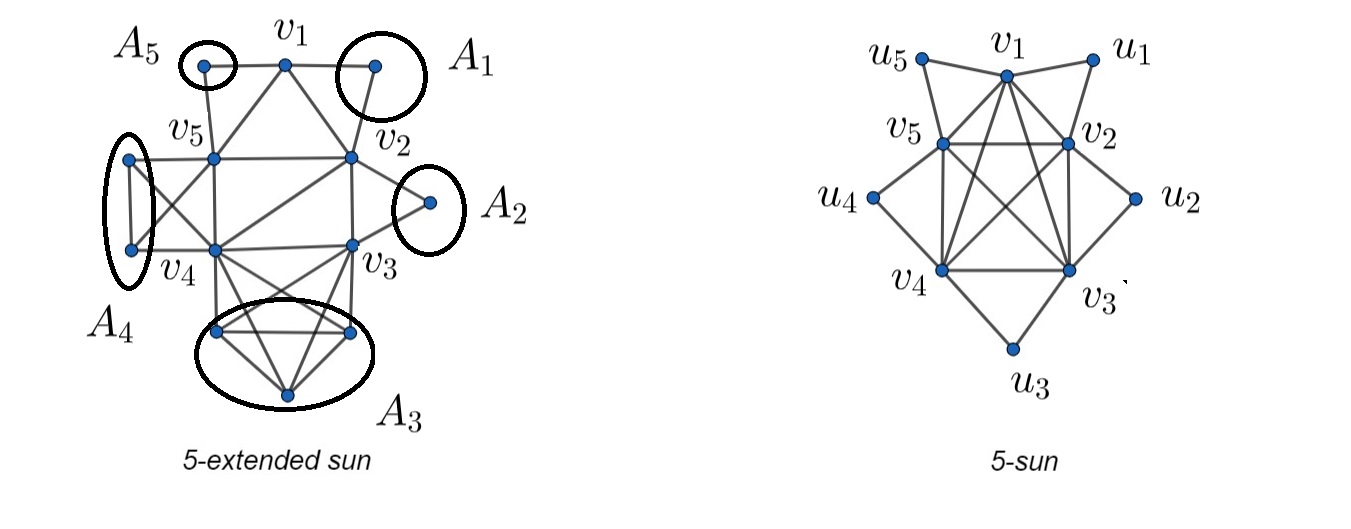}
            \caption{Examples of extended sun and sun on 5 vertices}
            \label{fig:sungraphs}
        \end{figure}
        
 \par The goal of this paper is to prove the following result.      
\newtheorem*{main}{\textbf{Theorem \ref{main}}}
\begin{main}[\textbf{Characterization Theorem for Triangulated $S$-perfect graphs}]
A triangulated graph $G$ is $S$-perfect if and only if $G$ is odd sun-free.
\end{main}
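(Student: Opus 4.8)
The plan is to exploit the fact that a star in $G$ is, up to adding leaves, the closed neighbourhood $N[c]$ of its centre $c$. Thus an $\mathbb{S}$-cover is the same thing as a family of closed neighbourhoods whose union is $V(G)$, so $\theta_S(G)$ is the minimum number of vertices whose closed neighbourhoods cover $G$ (its \emph{domination number}), while an $S$-independent set is a set of vertices whose closed neighbourhoods are pairwise disjoint, so $\alpha_S(G)$ is the maximum size of such a $2$-packing. With this dictionary the inequality $\theta_S\ge\alpha_S$ recorded above is the standard fact that a packing of pairwise disjoint closed neighbourhoods can never be larger than a covering family, and $S$-perfection becomes the demand that covering and packing agree on every induced subgraph. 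I would organise the proof around this reformulation, proving the two implications separately and using throughout that both triangulatedness and odd sun-freeness are hereditary.

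For the necessity direction I would argue by contraposition: since $S$-perfection is hereditary, it suffices to show that a single odd sun $H$ already fails $\theta_S(H)=\alpha_S(H)$. Write the Hamiltonian core as $v_1v_2\cdots v_kv_1$ with $k$ odd and let $u_i$ be the simplicial apex on the edge $v_iv_{i+1}$. Choosing the apices $u_1,u_3,\dots,u_{k-2}$ places them on a maximum matching of the core cycle, so their closed neighbourhoods $\{u_i,v_i,v_{i+1}\}$ are pairwise disjoint and this is an $S$-independent set; a short counting argument (each vertex of a $2$-packing consumes at least two core vertices, and there are only $k$ of them) shows it is optimal, giving $\alpha_S(H)=\tfrac{k-1}{2}$. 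On the other hand the stars centred at a minimum vertex cover of the core cycle dominate every apex (each cycle edge is hit) and every core vertex, and the dual counting argument (every dominating set must meet each set $\{u_i,v_i,v_{i+1}\}$, and a core vertex meets only two cyclically consecutive such sets) shows no smaller family works, giving $\theta_S(H)=\lceil k/2\rceil=\tfrac{k+1}{2}$. Hence $\theta_S(H)=\alpha_S(H)+1$, so $H$ — and therefore any triangulated graph containing it — is $S$-imperfect. The same computation returns $\theta_S=\alpha_S=\tfrac{k}{2}$ when $k$ is even, which is exactly why only the odd suns are excluded.

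For the sufficiency direction I would assume, for contradiction, a triangulated odd sun-free graph that is not $S$-perfect and pass to a minimal $S$-imperfect induced subgraph $G$; by heredity $G$ is still triangulated and odd sun-free, it satisfies $\theta_S(G)>\alpha_S(G)$, and every $G-v$ is $S$-perfect. The goal is to extract an induced odd sun from $G$. I would build it from the simplicial structure: pick a simplicial vertex $v$, let $Q=N[v]$ be the free clique containing it, and track how an optimal cover/packing of $G-v$ fails to extend to $G$. The failure must be caused by a vertex whose only cheap dominator lies in $Q$ and whose removal would merge two stars; chasing this obstruction produces a free triangle $\{u_i,v_i,v_{i+1}\}$ with $u_i$ simplicial, and triangulatedness forces the next obstruction to share the core vertex $v_{i+1}$, so the free triangles close up cyclically, one apex per tooth, into a sun with core cycle $v_1\cdots v_k$. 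Finally I would argue that the discrepancy $\theta_S(G)-\alpha_S(G)\ge 1$ cannot survive an even closing cycle — on an even cycle the local cover/packing computation of the previous paragraph already balances — so $k$ is odd and we have an induced odd sun. Conceptually this is the statement that the closed-neighbourhood hypergraph of a chordal graph is balanced precisely when the graph is odd sun-free, balanced hypergraphs being exactly those for which the covering and packing optima coincide on every subfamily; the odd sun plays the role of the forbidden odd cyclic submatrix.

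The main obstacle lies entirely in sufficiency, and it is twofold. First, because $\alpha_S$ and $\theta_S$ are not monotone under taking induced subgraphs — the phenomenon emphasised before the main theorem — I cannot bound $\theta_S(G)$ in terms of $\theta_S(G-v)$ by inequalities alone, so the induction must be carried out with explicit covers and packings rather than with the parameters, and every reduction has to reconstruct genuine stars and genuine distance-$3$ packing vertices in $G$ itself. Second, and more seriously, I must control the parity of the cyclic chain of free triangles: chordality readily yields a cyclic obstruction, but showing that minimality forces it to have odd length — equivalently, that an even closing cycle could be used to repair the cover down to the packing bound — is the delicate step, and it is here that the hypothesis $\theta_S(G)\ne\alpha_S(G)$ must be converted into the oddness of the extracted sun.
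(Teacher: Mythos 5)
Your necessity direction is sound and in fact more careful than what the paper records: you bound $\alpha_S$ of the odd $k$-sun from above by noting that each packed closed neighbourhood consumes at least two consecutive core vertices, and bound $\theta_S$ from below by noting that every star meets at most two of the $k$ sets $\{u_i,v_i,v_{i+1}\}$, whereas the paper's odd-sun lemma merely exhibits one independent set and one cover of sizes $m$ and $m+1$ (which by itself does not yet force $\alpha_S\neq\theta_S$). The genuine gap is entirely in sufficiency, and it is real: the two steps you yourself flag as delicate --- (i) that tracking how a cover of $G-v$ fails to extend to $G$ yields a cyclic chain of free triangles closing into a sun, and (ii) that minimal imperfection forces that cycle to have odd length --- are precisely the content of the theorem, and neither is argued. ``The failure must be caused by a vertex whose only cheap dominator lies in $Q$'' and ``the free triangles close up cyclically'' are statements of intent, not deductions; nothing in the proposal shows the obstruction chain does not branch or terminate, and no mechanism is offered for converting the single inequality $\theta_S(G)>\alpha_S(G)$ into a parity statement about an extracted cycle.

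The paper sidesteps both difficulties by aiming at a much smaller target: it never extracts a general odd $k$-sun, only a $3$-sun. Its two key lemmas are (a) in a minimal $S$-imperfect graph no vertex has its neighbourhood contained in another's (proved by transporting an $\alpha_S$-set and a $\theta_S$-cover between $G$ and $G-u$), so any two neighbours $u,w$ of a simplicial vertex $v$ have incomparable neighbourhoods; and (b) given witnesses $u_1\in N(u)\setminus N(w)$ and $w_1\in N(w)\setminus N(u)$ joined by a path avoiding $v,u,w$, chordality forces the last vertices of that path seen by $u$ and by $w$ to coincide in a common neighbour $t$, and the configuration $\{v,u,w,x,t,y\}$ is an induced $3$-sun. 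Since a $3$-sun is an odd sun, this already contradicts odd sun-freeness, and the parity analysis you were dreading never arises (evenness only enters afterwards, via a separate lemma that $3$-sun-free even extended suns are $S$-perfect). If you want to salvage your outline, the missing ingredient is exactly such a local lemma manufacturing one fixed forbidden configuration from minimality plus chordality, rather than a global cyclic structure whose parity must then be controlled.
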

\par The theorem is a min-max theorem for triangulated $S$-perfect graphs.

\par For any graph $G$, the central vertices of stars in $\mathbb{S}$-cover of $G$ is a dominating set of $G$. The minimum parameter $\theta_S(G)$ is essentially the domination number, $\gamma(G)$ of $G$, which has many applications like surveillance, controlling and monitoring. The maximum parameter $\alpha_S(G)$ have been extensively studied in various contexts with different terminology. For example, $S$-independent sets are studied in \cite{colorclass}, where an $S$-independent set in $G$ corresponds to a color class in a $(k,3)$-coloring of $G$. A study on characterization of star-perfect graphs, where all the stars in a star-cover of $G$ are essentially induced, is also done in \cite{starp}. 

\par Our characterization of triangulated $S$-perfect graphs goes parallel to triangulated neighbourhood perfect graphs. The notion of neighbourhood number which was introduced by  Sampathkumar and Neeralagi \cite{nbdno}. Lehel and Tuza defined neighbourhood perfect graphs and characterized triangulated neighbourhood perfect graphs \cite{npg}. Though seemingly neighbourhood perfect graphs and $S$-perfect graphs appear different (For example: $C_{6k+2}$ and $C_{6k+4}, k\geq1$ are neighbourhood perfect but not $S$-perfect and $C_{6k+3},k\geq~1$ are $S$-perfect but not neighbourhood perfect), surprisingly they are same for triangulated graphs (Theorem~\ref{main}).

\section{Results and Discussions}
We use the following theorem and lemmas in proving the main theorem which characterizes triangulated $S$-perfect graphs.
\begin{theorem}\label{sperfect}\cite{starp}
 A graph $G$ is $star$-perfect if and only if $G$ is $(C_3, C_{3k+1}, C_{3k+2})$-free, $k\geq 1$. 
\end{theorem}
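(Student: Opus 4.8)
The plan is to handle the two parameters defining star-perfection explicitly. For an induced subgraph $H$, write $\alpha^{\ast}(H)$ for the maximum number of vertices no two of which lie in a common \emph{induced} star, and $\theta^{\ast}(H)$ for the minimum number of induced stars covering $V(H)$. Exactly as with $\alpha_S,\theta_S$, every covering star meets an $\alpha^{\ast}$-set in at most one vertex, so $\theta^{\ast}(H)\ge\alpha^{\ast}(H)$ always, and star-perfection is the reverse inequality on every induced subgraph. Two vertices share an induced star precisely when they are at distance at most $2$, so $\alpha^{\ast}(H)=\rho(H)$, the $2$-packing number (maximum set of vertices pairwise at distance $\ge 3$). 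Moreover, in a triangle-free graph every closed neighbourhood $N[v]$ induces a star, whence $\theta^{\ast}(H)=\gamma(H)$, the domination number. I will use these two identifications throughout.

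For necessity I show each forbidden graph is itself star-imperfect, so by heredity no star-perfect graph can contain it. In $C_3$ the only induced stars are $K_1,K_2$, giving $\theta^{\ast}(C_3)=2$ but $\alpha^{\ast}(C_3)=1$. For $n\ge 4$ the cycle $C_n$ is triangle-free, so $\theta^{\ast}(C_n)=\gamma(C_n)=\lceil n/3\rceil$ and $\alpha^{\ast}(C_n)=\rho(C_n)=\lfloor n/3\rfloor$; these differ exactly when $3\nmid n$, that is for $n=3k+1$ and $n=3k+2$ with $k\ge 1$. Thus every graph in the forbidden list has $\alpha^{\ast}\neq\theta^{\ast}$, and a star-perfect graph must be $(C_3,C_{3k+1},C_{3k+2})$-free. (The separate appearance of $C_3$ is explained by the identity $\theta^{\ast}=\gamma$ failing on triangles.)

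For sufficiency, since the class of $(C_3,C_{3k+1},C_{3k+2})$-free graphs is closed under induced subgraphs, it suffices to prove $\alpha^{\ast}(G)=\theta^{\ast}(G)$ for every $G$ in the class. Being $C_3$-free, $G$ is triangle-free, so by the reductions above the goal becomes the purely metric identity $\gamma(G)=\rho(G)$ for triangle-free graphs all of whose induced cycles have length divisible by $3$. I plan to prove this by induction on $|V(G)|$, generalising the Meir–Moon tree argument: run a BFS from any vertex, let $x$ be a deepest vertex and $y$ a neighbour of $x$ one layer nearer the root, put $x$ into the packing and $y$ into the dominating set, delete $N[y]$, and recurse on the (hereditary) remainder $G-N[y]$. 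Weak duality then closes the induction provided (i) the recursively produced $2$-packing of $G-N[y]$ is still a $2$-packing of $G$, and (ii) the new packing vertex $x$ lies at distance $\ge 3$ from every surviving vertex.

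The main obstacle is precisely conditions (i) and (ii), which in a tree hold automatically because unique paths guarantee that deleting $N[y]$ changes no relevant distance, but which can fail in general: deleting $N[y]$ only increases distances, so a pair that was far in $G-N[y]$ may acquire a length-two shortcut through $N[y]$ in $G$, and $x$ itself may have a length-two connection to a survivor. Here the cycle hypothesis must do the work. I will argue that any such distance-two conflict, completed by the BFS shortest paths of the two endpoints back toward the root, closes a chordless cycle whose length is forced to be $\not\equiv 0\pmod 3$, contradicting the hypothesis. Making this forcing rigorous—choosing $x$ and $y$ so that the deleted neighbourhood absorbs every distance-two threat, and pinning down the residue modulo $3$ of the resulting induced cycle—is the technical heart of the argument and the step I expect to demand the most care.
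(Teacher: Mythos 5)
You should first note that this paper does not prove the statement at all: Theorem~\ref{sperfect} is imported from the reference \cite{starp} and used as a black box, so there is no internal proof to compare against, and your attempt has to be judged on its own merits. On those merits, your preliminary reductions are correct and cleanly executed: two vertices lie in a common induced star exactly when they are at distance at most $2$, giving $\alpha^{\ast}(H)=\rho(H)$; in a triangle-free graph every closed neighbourhood induces a star, giving $\theta^{\ast}(H)=\gamma(H)$; and your necessity computations ($\theta^{\ast}(C_3)=2>1=\alpha^{\ast}(C_3)$, and $\gamma(C_n)=\lceil n/3\rceil > \lfloor n/3\rfloor=\rho(C_n)$ precisely when $3\nmid n$) together with heredity correctly show that a star-perfect graph must be $(C_3,C_{3k+1},C_{3k+2})$-free. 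These calculations are consistent with the cycle counts the present paper uses in Lemmas~\ref{path}--\ref{forbidden}.

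The sufficiency direction, however, is a plan rather than a proof, and the unproved step is where the entire difficulty of the theorem lives. Your Meir--Moon-style induction needs, at every step, a pair $(x,y)$ such that deleting $N[y]$ (i) creates no distance-$2$ shortcut in $G$ between surviving packing vertices and (ii) leaves no survivor within distance $2$ of $x$; you give no selection rule achieving this, and the deepest-BFS-vertex choice does not by itself supply one (a deepest vertex $x$ can have same-layer neighbours whose neighbourhoods are untouched by $N[y]$). The proposed repair --- that any distance-two conflict ``closes a chordless cycle whose length is forced to be $\not\equiv 0 \pmod 3$'' --- is asserted, not argued, and it is genuinely delicate: the conflict only yields a closed \emph{walk} assembled from two BFS shortest paths and a shortcut, and a closed walk of length $\not\equiv 0\pmod 3$ need not contain an induced cycle of length $\not\equiv 0\pmod 3$, since chords can decompose it into pieces whose induced cycles are individually divisible by $3$. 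Pinning down the residue requires a careful chord/minimal-counterexample analysis that you have not supplied. As it stands, your argument establishes necessity plus the identities $\alpha^{\ast}=\rho$ and $\theta^{\ast}=\gamma$ (the easy half); you flagged the missing step yourself, which is honest, but the gap is real, and note that the machinery this paper builds for its analogous $S$-perfect theorem (minimal imperfect graphs, block structure as in Lemma~\ref{l3}) hints that the cited proof proceeds through minimal star-imperfect graphs rather than through a direct BFS induction.
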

\begin{lemma}\label{path}\cite{starp}
Let $k$ be any positive integer. Then,
\begin{itemize}
    \item[(i)] $\theta_s(P_k)=\left\lceil\dfrac{k}{3}\right\rceil$ and $\alpha_s(P_k)=\left\lceil\dfrac{k}{3}\right\rceil$.
    \item[(ii)] $P_k$ is star-perfect.
    \item[(iii)] The disjoint union of paths is a star-perfect graph.
\end{itemize}\end{lemma}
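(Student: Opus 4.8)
The plan is to prove (i) by a two-sided sandwiching argument and then obtain (ii) and (iii) for free from the additive property of the two parameters. Write $P_k=v_1v_2\cdots v_k$, so that the distance between $v_i$ and $v_j$ is $|i-j|$. For a lower bound on $\alpha_s$, I would take the explicit set consisting of every third vertex, $\{v_1,v_4,v_7,\dots\}$; consecutive chosen vertices differ in index by exactly $3$, so any two of them are at distance at least $3$ and the set is $S$-independent. Its cardinality is the largest integer $m$ with $1+3(m-1)\le k$, namely $m=\lceil k/3\rceil$, so $\alpha_s(P_k)\ge\lceil k/3\rceil$. For an upper bound on $\theta_s$, I would partition $V(P_k)$ into consecutive blocks of three vertices, with a trailing block of size $1$ or $2$ when $k\not\equiv 0\pmod 3$. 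Each full block induces a $P_3=K_{1,2}$, a star centred at its middle vertex, and a trailing block is $K_1$ or $K_2$, again a star; since every vertex of a path has degree at most $2$, all of these stars are induced, as the star-perfect setting demands. This exhibits a star cover with $\lceil k/3\rceil$ stars, so $\theta_s(P_k)\le\lceil k/3\rceil$.

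Combining the two constructions with the general inequality $\alpha_s(G)\le\theta_s(G)$ recorded earlier in the paper yields the chain
\[
\left\lceil\tfrac{k}{3}\right\rceil\le\alpha_s(P_k)\le\theta_s(P_k)\le\left\lceil\tfrac{k}{3}\right\rceil,
\]
which forces equality throughout and proves (i). The advantage of routing the argument through $\alpha_s\le\theta_s$ is that it makes a separate upper bound for $\alpha_s$ and a separate lower bound for $\theta_s$ unnecessary.

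For (ii) and (iii) I would use the elementary observation that deleting vertices from a path, or from a disjoint union of paths, again leaves a disjoint union of paths. Hence every induced subgraph $H$ of $P_k$—and more generally of any disjoint union of paths—has the form $P_{k_1}\cup\cdots\cup P_{k_r}$. Applying the additive property of $\alpha_s$ and $\theta_s$ together with part (i) gives
\[
\alpha_s(H)=\sum_{j=1}^{r}\alpha_s(P_{k_j})=\sum_{j=1}^{r}\left\lceil\tfrac{k_j}{3}\right\rceil=\sum_{j=1}^{r}\theta_s(P_{k_j})=\theta_s(H),
\]
so $\alpha_s(H)=\theta_s(H)$ for every induced subgraph. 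This is precisely the assertion that $P_k$ is star-perfect, and the same computation, read over an arbitrary disjoint union of paths, proves (iii).

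Since every step is elementary, I do not expect a genuine obstacle; the only points that need care are the ceiling arithmetic that makes the independent-set size and the cover size coincide, and the case analysis of the trailing block ($k\equiv 0,1,2\pmod 3$) so that the constructed cover really has exactly $\lceil k/3\rceil$ stars.
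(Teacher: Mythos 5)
Your proposal is correct and complete: the sandwich $\left\lceil\frac{k}{3}\right\rceil\le\alpha_s(P_k)\le\theta_s(P_k)\le\left\lceil\frac{k}{3}\right\rceil$ obtained from the every-third-vertex independent set and the blocks-of-three induced-star cover, followed by additivity over the components of an induced subgraph (which is always a disjoint union of paths) for (ii) and (iii), is exactly the standard argument. Note that the paper itself supplies no proof of this lemma --- it is imported from \cite{starp} --- and your write-up, including the care taken that the covering stars are induced and the ceiling arithmetic $\lfloor(k+2)/3\rfloor=\lceil k/3\rceil$, correctly fills in what that citation is standing in for.
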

The proofs of this Lemmas \ref{lemma2.2}, \ref{forbidden}, \ref{l3}  are similar to as shown in \cite{starp}.
\begin{lemma}\label{lemma2.2}
Any cycle of length $3k$, $k\geq1$ is $S$-perfect.
\end{lemma}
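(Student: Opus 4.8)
The plan is to partition the induced subgraphs of $C_{3k}$ into two families and handle each separately, leaning on the standing inequality $\theta_S(H)\geq\alpha_S(H)$ so that producing an $S$-independent set and an $\mathbb{S}$-cover of the \emph{same} size forces equality. The first observation is that every \emph{proper} induced subgraph of $C_{3k}$ arises by deleting at least one vertex, which breaks the cycle; hence any such subgraph is a disjoint union of paths, and no induced cycle survives. Thus the only induced subgraph of $C_{3k}$ that is not a forest of paths is $C_{3k}$ itself, and I only ever have to check the equality $\alpha_S(H)=\theta_S(H)$ for these two types of $H$.

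For the union-of-paths case I would invoke Lemma~\ref{path}: each path $P_m$ satisfies $\alpha_S(P_m)=\theta_S(P_m)=\lceil m/3\rceil$, and since both $\alpha_S$ and $\theta_S$ are additive over connected components, any disjoint union of paths $H$ satisfies $\alpha_S(H)=\theta_S(H)$. On a forest the distance-$3$ condition defining $S$-independence and the star-cover condition defining $\theta_S$ coincide with the induced-star versions studied in \cite{starp}, so Lemma~\ref{path} transfers verbatim and disposes of every proper induced subgraph simultaneously.

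For $C_{3k}$ itself, writing the vertices as $v_1 v_2\cdots v_{3k}v_1$, I would exhibit the set $T=\{v_1,v_4,\dots,v_{3k-2}\}$ of size $k$. Consecutive chosen vertices, including the wrap-around pair $v_{3k-2},v_1$, lie at distance exactly $3$, so $T$ is $S$-independent and $\alpha_S(C_{3k})\geq k$. Dually, the stars centered at $v_2,v_5,\dots,v_{3k-1}$ each cover three consecutive vertices and together cover all of $C_{3k}$, giving an $\mathbb{S}$-cover of size $k$, whence $\theta_S(C_{3k})\leq k$. Combining these with $\theta_S\geq\alpha_S$ squeezes $k\leq\alpha_S(C_{3k})\leq\theta_S(C_{3k})\leq k$, so $\alpha_S(C_{3k})=\theta_S(C_{3k})=k$. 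Since every induced subgraph $H$ of $C_{3k}$ — be it $C_{3k}$ or a forest of paths — satisfies $\alpha_S(H)=\theta_S(H)$, the cycle $C_{3k}$ is $S$-perfect.

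The argument is essentially routine, and the squeeze trick means I never have to separately prove tight packing upper bounds for $\alpha_S$ or covering lower bounds for $\theta_S$. The only points deserving care are the wrap-around distance computation certifying that $T$ is genuinely $S$-independent (and the degenerate case $k=1$, where $C_3$ gives $\alpha_S=\theta_S=1$), and the remark reconciling the lowercase notation $\alpha_s,\theta_s$ of Lemma~\ref{path} with $\alpha_S,\theta_S$ on forests, so that the cited equalities really are the ones needed here.
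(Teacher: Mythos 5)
Your proof is correct. The paper itself gives no proof of this lemma, deferring to the argument in \cite{starp}; your write-up supplies the natural one: proper induced subgraphs of $C_{3k}$ are disjoint unions of paths, handled by Lemma~\ref{path} and additivity, while for $C_{3k}$ itself the set $\{v_1,v_4,\dots,v_{3k-2}\}$ and the stars centered at $v_2,v_5,\dots,v_{3k-1}$ squeeze $k\leq\alpha_S\leq\theta_S\leq k$. Your two points of care are well taken and both check out: the wrap-around pair is at distance $\min(3,3k-3)\geq 3$ for $k\geq 2$ (and $k=1$ is trivial), and since paths are triangle-free, every star subgraph of a path is induced, so the lowercase-$s$ parameters of Lemma~\ref{path} coincide with $\alpha_S,\theta_S$ there.
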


\begin{lemma}\label{forbidden}
Any cycle of length $3k+1$ or $3k+2$, $k\geq1$ is minimal $S$-imperfect.
\end{lemma}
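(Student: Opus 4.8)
The plan is to verify the two defining conditions of minimal $S$-imperfectness separately: first that $\theta_S(C_n)\neq\alpha_S(C_n)$ when $n\equiv 1,2\pmod 3$, and then that $C_n-v$ is $S$-perfect for every vertex $v$. Both rest on exact values of the two parameters on cycles, so I would begin by computing $\theta_S(C_n)=\lceil n/3\rceil$ and $\alpha_S(C_n)=\lfloor n/3\rfloor$.

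For $\theta_S$, the lower bound comes from the fact that every vertex of $C_n$ has degree $2$, so any star contained in $C_n$ is at most a copy of $K_{1,2}$ and therefore covers at most three vertices; hence $\theta_S(C_n)\geq\lceil n/3\rceil$. The matching upper bound follows by covering the cycle greedily with disjoint $P_3$-stars, spending $\lceil n/3\rceil$ of them. For $\alpha_S$, I would use a gap-counting argument: if $T$ is an $S$-independent set of size $t$, then consecutive chosen vertices are at cyclic distance at least $3$, and since the gaps around the cycle sum to $n$ we get $3t\leq n$, so $t\leq\lfloor n/3\rfloor$; choosing every third vertex attains this. Substituting $n=3k+1$ and $n=3k+2$ then yields $\alpha_S(C_n)=k<k+1=\theta_S(C_n)$ in both cases, establishing condition (i).

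For the minimality condition (ii), I would exploit the vertex-transitivity of $C_n$: deleting any vertex gives $C_n-v\cong P_{n-1}$, so it suffices to show that the path $P_{n-1}$ is $S$-perfect. Every induced subgraph of a path is a disjoint union of paths, and the additivity of $\alpha_S$ and $\theta_S$ over components reduces the problem to a single path. The key bridging observation is that, because paths have maximum degree $2$, every star subgraph of a path is automatically induced; consequently an $S$-cover of a path is the same as a star-cover, and the $S$-parameters coincide with the star-parameters. Thus Lemma~\ref{path}(i)--(iii) transfer verbatim, giving $\alpha_S(P_j)=\theta_S(P_j)=\lceil j/3\rceil$ for every path factor and hence $\alpha_S(H)=\theta_S(H)$ for every induced subgraph $H$ of $P_{n-1}$. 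This shows $C_n-v$ is $S$-perfect for all $v$, completing (ii).

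I expect the main obstacle to be the tight computation of $\theta_S(C_n)$, specifically the lower bound argument that no star can cover more than three vertices of the cycle; everything else is either a direct packing estimate or an application of Lemma~\ref{path} once the degree-$2$ observation identifies $S$-covers with star-covers on paths. Care is also needed to confirm that the distance-at-least-$3$ formulation of $S$-independence makes $\alpha_S$ and the star-independence number agree on paths, so that Lemma~\ref{path} may be invoked without modification.
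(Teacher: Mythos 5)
Your proof is correct, and since the paper itself gives no explicit argument for this lemma (it only remarks that the proof is ``similar to'' the one in \cite{starp}), your write-up is essentially the omitted proof spelled out in full: the exact values $\theta_S(C_n)=\lceil n/3\rceil$ and $\alpha_S(C_n)=\lfloor n/3\rfloor$ obtained from the degree-2 bound on stars and the gap-counting bound on $S$-independent sets, giving $\alpha_S=k<k+1=\theta_S$ for $n=3k+1,3k+2$; and then the reduction of $C_n-v\cong P_{n-1}$ to Lemma \ref{path} via additivity over the path components of any induced subgraph. One step in your argument deserves emphasis because the paper never makes it explicit: the observation that in a graph of maximum degree 2 every star subgraph is automatically induced, so that the $S$-parameters $\alpha_S,\theta_S$ coincide with the star-perfect parameters $\alpha_s,\theta_s$ of \cite{starp} on paths. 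This identification is precisely what licenses importing Lemma \ref{path} (stated for star-perfectness) into the $S$-perfect setting, and the paper relies on it silently both here and later (e.g.\ in the proof of Lemma \ref{evensunlemma}); your proposal is the more careful treatment on this point.
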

Our next goal is to show that a minimal $S$-imperfect graph is a block.
\begin{lemma}\label{l3}
If $G$ is minimal $S$-imperfect graph, then $G$ is a block.
\end{lemma}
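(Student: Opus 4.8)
The plan is to argue by contradiction, using the already-established fact that a minimal $S$-imperfect graph is connected; it therefore suffices to show that $G$ has no cut vertex. Suppose to the contrary that $v$ is a cut vertex, and let $G_1,\dots,G_t$ (with $t\geq 2$) be the components of $G-v$. For each $i$ set $H_i=G[V(G_i)\cup\{v\}]$, so that the branches $H_i$ pairwise meet exactly in $\{v\}$ and $G=\bigcup_i H_i$. Because $t\geq 2$, each $H_i$ is a proper induced subgraph of $G$, so by minimality every proper induced subgraph of $G$ is $S$-perfect; in particular each $H_i$, each $G_i$, and $G-v=\bigsqcup_i G_i$ are $S$-perfect. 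Hence $\alpha_S(H_i)=\theta_S(H_i)$ for every $i$, and by the additive property $\sum_i\alpha_S(G_i)=\alpha_S(G-v)=\theta_S(G-v)=\sum_i\theta_S(G_i)$. The aim is to deduce $\alpha_S(G)=\theta_S(G)$, contradicting the $S$-imperfectness of $G$.

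Since $\theta_S(G)\geq\alpha_S(G)$ always holds, it is enough to produce an $S$-cover and an $S$-independent set of $G$ of equal size. First I would record how $v$ constrains $S$-independent sets: any two vertices of $N_G[v]$ are at distance at most $2$, so an $S$-independent set of $G$ contains at most one vertex of $N_G[v]$. Next I would observe that for $a,b\in V(G)\setminus N_G[v]$ every $a$–$b$ path through $v$ has length at least $4$; thus when $a,b$ lie in the same branch $G_i$ their distance in $G$ equals their distance in $G_i$, and when they lie in different branches it is at least $4$. Consequently the $S$-independence condition, restricted to vertices avoiding $N_G[v]$, is exactly the within-branch condition, so such sets decompose additively over the branches.

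Using these two facts I would split the analysis according to whether an optimal $S$-independent set (respectively, the star that dominates $v$ in an optimal cover) uses a vertex of $N_G[v]$. For each branch this requires the rooted quantities ``maximum $S$-independent set of $G_i$ avoiding $N_G[v]$'' and ``minimum star family covering $G_i$ in which $v$ is, or is not, already covered''. The $S$-perfectness of $H_i$ and of $G_i$ forces the corresponding rooted packing and covering quantities to coincide branch by branch, while the ``at most one vertex of $N_G[v]$'' constraint lets me reassemble them into a global $S$-independent set and a global $S$-cover of the same cardinality, using a single star centered at $v$ to absorb all of $N_G[v]$ at unit cost. Matching these two constructions yields $\alpha_S(G)=\theta_S(G)$, the desired contradiction.

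The main obstacle is exactly the failure of monotonicity stressed earlier in the paper: one cannot simply lift an optimal $S$-independent set of $G-v$ (or of a branch) to $G$, because reinserting $v$ shortens distances through $v$ and may destroy the independence of two near-$v$ vertices, while at the same time a single star at $v$ can cheaply dominate the whole set $N_G[v]$. The delicate point is to show that the saving available to the cover at $v$ is exactly compensated by the loss forced on the packing at $N_G[v]$; this is where the branch-wise equalities coming from $S$-perfectness must be combined with care, and the case in which some branch attains its optimum only by placing a packing vertex inside $N_G[v]$ is the one demanding the most attention.
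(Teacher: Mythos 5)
Your setup coincides with the paper's: you assume a cut vertex $v$, split $G-v$ into components $G_i$, form the lobes $G[V(G_i)\cup\{v\}]$, invoke minimality to make every proper induced piece $S$-perfect, and record two correct structural facts (an $S$-independent set meets $N_G[v]$ in at most one vertex, and the independence condition decomposes over branches once $N_G[v]$ is avoided). But the proposal stops exactly where the real work begins. The entire content of the lemma is the construction of an $S$-cover and an $S$-independent set of $G$ of equal cardinality, and you replace that construction with the assertion that ``the $S$-perfectness of $H_i$ and of $G_i$ forces the corresponding rooted packing and covering quantities to coincide branch by branch.'' That assertion is not a formal consequence of $S$-perfectness: the rooted parameters you introduce (maximum $S$-independent set of $G_i$ avoiding $N_G[v]$; minimum star cover of $G_i$ with $v$ pre-covered or not) are not the $\alpha_S$ or $\theta_S$ of any induced subgraph, precisely because of the non-monotonicity you yourself flag --- a set that is $S$-independent in $G_i - N_G[v]$ need not be $S$-independent in $G_i$, since reinstating $N_G[v]$ can create distance-two shortcuts inside the same branch, not only through $v$. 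So equality of these rooted quantities needs its own argument, and none is given; indeed your closing paragraph concedes that the case where a branch attains its optimum only by a packing vertex inside $N_G[v]$ is unresolved.

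For comparison, the paper settles this by a case split on whether $v$ lies in every $\alpha_S$-set of $G$. If yes (Case 1), it covers each $G_i$ optimally and adds one star at $v$, against an $S$-independent set of size $\sum_{i}\alpha_S(G_i)+1$ extracted from the lobes, giving $\theta_S(G)\leq\sum_i\theta_S(G_i)+1=\sum_i\alpha_S(G_i)+1\leq\alpha_S(G)$. If no (Case 2), it sets $H_i:=G_i-N[v]$, covers each $H_i$ optimally, adds the single star $N[v]$, and matches this against $\alpha_S$-sets of the $H_i$ together with one vertex $u\in N(v)$ chosen from an $\alpha_S$-set of a lobe; equality of sizes then follows from $\alpha_S(H_i)=\theta_S(H_i)$, which minimality does supply because each $H_i$ is a proper induced subgraph. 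In both cases the contradiction is $\theta_S(G)\leq\alpha_S(G)$. Whatever one thinks of the fine details there, the paper commits to explicit constructions and explicit counting; your text defers exactly that step, so as written it is a plan for a proof rather than a proof, and the deferred step is the one carrying all the difficulty.
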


\begin{lemma}\label{unilemma}\cite{dirac,bgraphs}
Let $G$ be a triangulated graph, then $G$ has a simplicial vertex. 
\end{lemma}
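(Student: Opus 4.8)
The plan is to prove this classical theorem of Dirac by induction on $|V(G)|$. First I would record that the paper's definition of a simplicial vertex (belonging to a unique maximal clique) coincides with the more convenient statement that the closed neighbourhood $N[v]$ induces a clique: if $v$ lies in a unique maximal clique $K$, then every neighbour $u$ of $v$ extends the edge $uv$ to some maximal clique, which must be $K$, so $N[v]=K$ is a clique; the converse is immediate. I would in fact prove the slightly stronger assertion that a triangulated graph which is not complete has two \emph{non-adjacent} simplicial vertices, since this is the form that makes the induction close. The base case (either $|V(G)|\le 2$ or $G$ complete) is trivial, as every vertex of a complete graph is simplicial.

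For the inductive step, suppose $G$ is not complete, so it contains non-adjacent vertices $a$ and $b$. Let $S$ be a minimal $a$--$b$ separator and let $A,B$ be the vertex sets of the components of $G-S$ containing $a$ and $b$. The crucial sublemma is that $G[S]$ is complete. By minimality of $S$, every vertex of $S$ has a neighbour in both $A$ and $B$; hence for any two vertices $u,v\in S$ one may choose a shortest $u$--$v$ path $P$ with interior in $A$ and a shortest $u$--$v$ path $Q$ with interior in $B$. Since there are no edges between $A$ and $B$, the cycle $P\cup Q$ has length at least $4$ and its only possible chord is $uv$, so chordality forces $uv\in E(G)$.

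Next I would apply the induction hypothesis to $G_A:=G[A\cup S]$, which is triangulated and omits $b$, hence has strictly fewer vertices. Either $G_A$ is complete, in which case any vertex of $A$ is simplicial in $G_A$, or it has two non-adjacent simplicial vertices, at most one of which can lie in the clique $S$; so in either case some simplicial vertex $x$ of $G_A$ lies in $A$. Because $S$ separates $A$ from the rest of $G$, we have $N_G(x)\subseteq A\cup S=V(G_A)$, whence $N_G(x)=N_{G_A}(x)$ is a clique and $x$ is simplicial in $G$. The symmetric argument on $G_B:=G[B\cup S]$ yields a simplicial vertex $y\in B$ of $G$; as $x\in A$ and $y\in B$ are separated by $S$, they are non-adjacent, which establishes the stronger statement and in particular the lemma.

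The main obstacle is the sublemma that a minimal separator of a triangulated graph induces a clique; once this is in hand, the bookkeeping that keeps the simplicial vertex in the interior of a component (so that its $G$-neighbourhood does not grow beyond $A\cup S$) is routine. I would take care to invoke the minimality of $S$ correctly, since that is exactly what guarantees each vertex of $S$ reaches both $A$ and $B$ and thus that the two shortest paths forming the cycle exist.
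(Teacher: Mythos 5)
Your proof is correct: it is precisely Dirac's classical argument (minimal separators in a chordal graph are cliques, plus induction on the stronger claim that a non-complete chordal graph has two non-adjacent simplicial vertices), and the reduction of the paper's ``unique maximal clique'' definition to ``$N[v]$ is a clique'' is handled properly. The paper itself gives no proof of this lemma, citing Dirac and Ravindra instead, and your argument is essentially the one found in the cited source, so there is nothing to compare beyond noting that you have supplied the omitted details correctly.
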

\begin{lemma}\label{anyv}
 If $G$ is minimal $S$-imperfect, then $\theta_S(G)=\alpha_S(G)+1$.
\end{lemma}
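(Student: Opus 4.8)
The plan is to prove the two inequalities $\theta_S(G)\ge\alpha_S(G)+1$ and $\theta_S(G)\le\alpha_S(G)+1$ separately. The first is immediate: the paper already records $\theta_S(H)\ge\alpha_S(H)$ for every graph $H$, and since $G$ is $S$-imperfect we have $\theta_S(G)\neq\alpha_S(G)$, hence $\theta_S(G)\ge\alpha_S(G)+1$. All the work is in the reverse inequality $\theta_S(G)\le\alpha_S(G)+1$. It is worth keeping in mind the reformulation noted later in the paper: the centres of an $\mathbb{S}$-cover form a dominating set, identifying $\theta_S$ with the domination number $\gamma$, while an $S$-independent set is exactly a set of vertices with pairwise disjoint closed neighbourhoods, that is, a $2$-packing. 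Thus the statement asserts that a graph that is minimal with respect to $\gamma>\rho$ (with $\rho$ the $2$-packing number) satisfies $\gamma=\rho+1$.

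The first structural tool is the deletion inequality $\theta_S(G)\le\theta_S(G-v)+1$, valid for every vertex $v$: take a $\theta_S$-cover of $G-v$ and adjoin the trivial star $K_1=\{v\}$. Since $G$ is minimal $S$-imperfect, $G-v$ is $S$-perfect, so $\theta_S(G-v)=\alpha_S(G-v)$, and therefore
\[
\theta_S(G)\le\alpha_S(G-v)+1\qquad\text{for every }v\in V(G).
\]
Consequently it would suffice to produce a single vertex $v$ with $\alpha_S(G-v)\le\alpha_S(G)$.

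To control $\alpha_S$ under deletion I would analyse how an $S$-independent set of $G-v$ sits inside $G$. If $T$ is $S$-independent in $G-v$, then any two of its vertices are at distance $\ge 3$ in $G-v$; deleting $v$ can only create shortcuts through $v$, and a new connection of length $\le 2$ between $a,b\in T$ must pass through $v$, forcing $a,b\in N(v)$. Hence the only pairs of $T$ that can fail to be $S$-independent in $G$ lie inside $A:=N(v)\cap T$, all such pairs are at distance exactly $2$ through $v$, and every vertex of $T\setminus A$ remains at distance $\ge 3$ in $G$ from all of $T$. In particular, if some maximum $S$-independent set $T$ of $G-v$ has $|N(v)\cap T|\le 1$, then $T$ is already $S$-independent in $G$, so $\alpha_S(G)\ge|T|=\alpha_S(G-v)$ and we are done by the displayed inequality. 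The main obstacle is the opposite case: it may happen, as for $C_5$, that every vertex $v$ meets every maximum $S$-independent set of $G-v$ in at least two vertices, in which case the naive bound only yields $\theta_S(G)\le\alpha_S(G)+2$.

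To defeat this case I would argue by contradiction, assuming $\theta_S(G)\ge\alpha_S(G)+2$. The displayed inequality then forces $\alpha_S(G-v)=\theta_S(G-v)\ge\theta_S(G)-1\ge\alpha_S(G)+1$ for every $v$, so every vertex is strictly critical for $\alpha_S$ under deletion. Fixing such a $v$ and a maximum $S$-independent set $T$ of $G-v$, the set $A=N(v)\cap T$ has $|A|\ge2$, its vertices are pairwise at distance $2$ through $v$, and $T\setminus A$ stays at distance $\ge3$ from all of $T$ in $G$. The goal is then to exhibit, inside $G$ itself, an $S$-independent set of size exceeding $\alpha_S(G)$, which is the required contradiction. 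The plan is to resolve the conflicts concentrated in $A$ by an exchange: delete a vertex $a\in A$, re-optimise in the $S$-perfect graph $G-a$ (where the equality $\alpha_S(G-a)=\theta_S(G-a)$ is available), and use the $2$-connectivity supplied by Lemma~\ref{l3} to reroute the packing around $v$ without collapsing it. I expect this exchange to be the delicate step, precisely because $\alpha_S$ and $\theta_S$ are non-monotone under induced subgraphs, as flagged in the introduction; the block hypothesis is what prevents a single deletion from fragmenting the optimal packing, while the $S$-perfectness of every proper induced subgraph supplies the equalities that drive the re-optimisation.
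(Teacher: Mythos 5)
Your opening moves coincide with the paper's skeleton: $\theta_S(G)\ge\alpha_S(G)+1$ from minimality, the deletion bound $\theta_S(G)\le\theta_S(G-v)+1=\alpha_S(G-v)+1$ from $S$-perfection of $G-v$, and the correct observation that any pair of a set $T$ that is $S$-independent in $G-v$ but conflicts in $G$ must lie in $N(v)\cap T$ (a shortcut of length at most $2$ can only pass through $v$). The genuine gap is that the case you yourself single out as the main obstacle is never closed. Your final paragraph is a statement of intent --- delete some $a\in A$, ``re-optimise'' in the $S$-perfect graph $G-a$, ``reroute the packing'' using the block property of Lemma~\ref{l3} --- but no $S$-independent set of size exceeding $\alpha_S(G)$ is ever constructed, and you concede that the exchange is the delicate step you \emph{expect} to work rather than one you carry out. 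As written, the proposal proves only the easy inequality together with $\theta_S(G)\le\alpha_S(G-v)+1$, which, as your own $C_5$ discussion shows, can be strictly weaker than the claim: in $C_5$ every vertex $v$ has $\alpha_S(C_5-v)=2>1=\alpha_S(C_5)$, so no choice of vertex makes the naive bound tight, and your contradiction argument from $\theta_S(G)\ge\alpha_S(G)+2$ is left entirely unexecuted.

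The idea you are missing is the \emph{choice} of the deleted vertex: the paper takes $v$ simplicial. Although the statement of the lemma, read in isolation, omits the hypothesis, the paper's proof opens with ``Since $G$ is triangulated, by Lemma~\ref{unilemma}, $G$ has a simplicial vertex $v$,'' and the lemma is only ever applied to triangulated graphs. Simpliciality annihilates your bad case outright: if two vertices of $T$ both lay in $N(v)$, they would be adjacent, since $N(v)$ is a clique, contradicting $S$-independence of $T$ in $G-v$. Hence $|N(v)\cap T|\le 1$ holds automatically, $T$ remains $S$-independent in $G$, and therefore $\alpha_S(G)\ge\alpha_S(G-v)=\theta_S(G-v)\ge\theta_S(G)-1$, which with the easy inequality gives $\theta_S(G)=\alpha_S(G)+1$. (The paper splits into the cases $\alpha_S(G)=1$ and $\alpha_S(G)\ge 2$, but both reduce to exactly this simplicial-vertex argument.) In short: you attempted the general, non-triangulated statement, for which your $C_5$ example shows the vertex-deletion strategy cannot suffice and some genuinely new argument would be needed; the paper never faces that difficulty because the triangulated hypothesis hands it a vertex whose deletion provably cannot increase $\alpha_S$.
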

\begin{proof}
Since $G$ is triangulated, by Lemma \ref{unilemma}, $G$ has a simplicial vertex $v$. If $\alpha_S(G)=1$, then $\theta_S(G)\neq 1$ since $\theta_S(G)\neq \alpha_S(G)$. Therefore $\theta_S(G)\geq2$. We observe that $\alpha_S(G-v)=1$.\hfill (A)\\
If not, there exist  $v_1, v_2$ in $V(G-v)$ such that $\{v_1, v_2\}$ is $S$-independent in $G-v$. Since $\alpha_S(G)=1$, $v\leftrightarrow \{v_1, v_2\}$. Then $v_1\leftrightarrow v_2$, since $v$ is simplicial in $G$, a contradiction. Since $\theta_S(G)\geq 2$, $\theta_S(G)\geq \alpha_S(G)+1$. Also $\theta_S(G)\leq \theta_S(G-v)+1$, since a $S$-cover of $G$ contains at least $\theta_S(G-v)+1$ stars. Then $\theta_S(G)\leq \alpha_S(G-v)+1=\alpha_S(G)+1$, implying $\theta_S(G)=\alpha_S(G)+1$, so the lemma is true if $\alpha_S(G)=1$. If $\alpha_S(G)\geq 2$, let $T=\{v_1, v_2,\dots, v_k\}, k\geq 2$ be an $\alpha_S$-independent set of $G-v$. If $k=1$, then $\alpha_(G-v)=1$ and so $\theta_S(G-v)=1$. Then $\theta_S(G)\leq \theta_S(G-v)+1=\alpha_S(G-v)+1=2$, by (A). That is $\theta_S(G)=\alpha_S(G)$, a contradiction to $G$ being minimal $S$-imperfect. Therefore $k\geq 2$.\\
If $T$ is not an $\mathbb{S}$-independent set in $G$, then then there are two vertices $v_1, v_2$ in $T$ such that $\{v_1,v_2\}$ is not $S$-independent in $G$. As argued earlier we have a contradiction.  $|T|\leq \alpha_S(G)$. By definition of $T$, $|T|=\alpha_S(G-v)$. Therefore $\alpha_S(G)\geq\alpha_S(G-v)$
Since $\alpha_S(G-v)=\theta_S(G-v)$ we have $\alpha_S(G)\geq\theta_S(G-v)$. That is $\alpha_S(G)+1\geq\theta_S(G-v)+1\geq\theta_S(G)$. Therefore $\theta_S(G)=\alpha_S(G)$ or $\theta_S(G)=\alpha_S(G)+1$. However $\theta_S(G)\neq\alpha_S(G)$, since $G$ is minimal $S$-imperfect, therefore $\theta_S(G)=\alpha_S(G)+1$. Hence the lemma. 
\end{proof}
\begin{lemma}\label{neighbor}
    If $G$ is a minimal $S$-imperfect graph, then for $u,v\in V(G)$, neither $N(u)\subseteq N(v)$ nor $N(v)\subseteq N(u)$.
\end{lemma}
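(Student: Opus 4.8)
The plan is to argue by contradiction, assuming without loss of generality that $N(u)\subseteq N(v)$ for some pair $u,v$. First I would extract the easy structural consequences. If $u\sim v$ then $v\in N(u)\subseteq N(v)$, which is impossible, so $u$ and $v$ are non-adjacent. Since $G$ is triangulated, I claim $N(u)$ is a clique: if $a,b\in N(u)$ were non-adjacent, then $a,b\in N(v)$ as well, and $a-u-b-v-a$ would be an induced $C_4$ (using $u\not\sim v$ and $a\not\sim b$), contradicting triangulation. Hence $N[u]$ is a clique, so $u$ is a simplicial vertex, while $\{v\}\cup N(u)$ is also a clique; this is the structural leverage I would try to exploit.

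Next I would control both parameters under deletion of $u$. The key observation is that removing $u$ does not change distances between the remaining vertices: any shortest path through $u$ enters and leaves via two vertices $x,y\in N(u)\subseteq N(v)$, and replacing the sub-walk $x,u,y$ by $x,v,y$ produces a walk of the same length avoiding $u$; combined with the trivial bound $d_{G-u}\ge d_G$ this gives $d_G(a,b)=d_{G-u}(a,b)$ for all $a,b\neq u$. Consequently every $S$-independent set of $G-u$ remains $S$-independent in $G$, so $\alpha_S(G)\ge\alpha_S(G-u)$, while deleting $u$ from any $S$-independent set of $G$ gives $\alpha_S(G)\le\alpha_S(G-u)+1$; adjoining a singleton star $\{u\}$ likewise yields $\theta_S(G)\le\theta_S(G-u)+1$. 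Writing $m:=\theta_S(G-u)=\alpha_S(G-u)$ (equality by $S$-perfectness of $G-u$) and invoking Lemma \ref{anyv}, so that $\theta_S(G)=\alpha_S(G)+1$, these inequalities pin everything down to $\alpha_S(G)=m$ and $\theta_S(G)=m+1$.

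The contradiction should then come from producing an $\mathbb{S}$-cover of $G$ with only $m$ stars. Starting from a $\theta_S$-cover of $G-u$, which uses $m$ stars and already covers $v$ and every vertex of $N(u)$, the idea is: if some star of this cover is centred at a vertex $x\in N(u)$, I may simply append $u$ as a leaf, legal since $x\sim u$, obtaining an $m$-star cover of $G$ and hence $\theta_S(G)\le m<m+1$, the desired contradiction. The whole argument thus reduces to showing that a minimum cover of $G-u$ can always be chosen with a star centred inside $N(u)$; here I would try to re-centre an existing star (the one covering $v$, or the one dominating a fixed $x\in N(u)$) onto a vertex of $N(u)$ using the cliques $N[u]$ and $\{v\}\cup N(u)$ and the simpliciality of $u$, without increasing the number of stars.

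The step I expect to be the genuine obstacle is precisely this re-centring. Deleting a ``dominated'' simplicial vertex need not decrease the covering number in general: for the path $u-x_1-v-x_2$ one has $N(u)=\{x_1\}\subseteq N(v)=\{x_1,x_2\}$ yet $\theta_S(P_4)=2>1=\theta_S(P_3)$, so a purely local swap cannot work and minimal $S$-imperfectness together with the triangulated clique structure must enter essentially. Verifying that the star one wishes to modify privately covers nothing that would be lost, and that an admissible re-centring into $N(u)$ is always available, is where the real work lies; a robust treatment would likely phrase $\theta_S$ as the domination number $\gamma$ and argue through a careful exchange on a minimum dominating set of $G-u$.
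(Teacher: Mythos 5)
Your set-up is sound and runs parallel to the paper's own argument: the non-adjacency of $u$ and $v$, the observation that deleting $u$ preserves all distances among the remaining vertices (reroute $x,u,y$ through $v$), hence that every $S$-independent set of $G-u$ stays $S$-independent in $G$, giving $\alpha_S(G)\ge\alpha_S(G-u)=\theta_S(G-u)=m$; combined with Lemma \ref{anyv} this pins down $\alpha_S(G)=m$ and $\theta_S(G)=m+1$. The paper does essentially the same thing, except that it aims directly at the contradiction $\alpha_S(G)\ge\theta_S(G)$ by proving $\theta_S(G-u)\ge\theta_S(G)$ rather than by exhibiting an $m$-star cover of $G$; the two formulations are equivalent, and both hinge on the same remaining claim.

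That remaining claim is the genuine gap, and it is the one you flag yourself: you never show that $u$ can be absorbed into a minimum cover of $G-u$ without increasing its size, and without that there is no contradiction. This is not a routine verification. The star of a minimum cover of $G-u$ that covers your chosen $x\in N(u)$ may be centred outside $N[u]$, and re-centring it inside $N(u)$ can uncover other leaves; your $P_4$ example already shows the naive swap fails, and the cycle $C_4$ (which Lemma \ref{forbidden} certifies as minimal $S$-imperfect, and in which antipodal vertices satisfy $N(u)=N(v)$ while $\theta_S(C_4)=2>1=\theta_S(P_3)$) shows that the absorption step cannot be carried out by purely formal means: the triangulated/clique structure of $N[u]$ must be used essentially, not merely be available. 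The paper disposes of exactly this step by asserting that, the stars of a $\theta_S$-cover being maximal, any star containing $u$ is a substar of the star $S_v$ containing $v$, so that a $(\theta_S(G)-1)$-cover of $G-u$ would yield one of $G$; whatever one thinks of that assertion, it is precisely the piece your proposal leaves open, so the proposal as written is an outline of the correct strategy rather than a proof.
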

\begin{proof}
 Suppose false, then say $N(u)\subseteq N(v)$. Let $S=\{S_1, S_2, \dots S_w, \dots, S_k\}$ be a $\theta_S$-cover of $G$ where each star $S_i$ is maximal and $S_v$ is a star containing $v$. By Lemma \ref{anyv}, the stars in $\theta_S$-cover of $G$ are $\alpha_S(G)+1$ in number. Since $N(u)\subseteq N(v)$, any star $S'$ containing $u$ is a substar (a subgraph which is a star) of $S_v$, $\theta_S(G-u)\leq\theta_S(G)$. We observe that $\theta_S(G-u)=\theta_S(G)$. If $\theta_S(G-u)<\theta_S(G)$, then $\theta_S(G-u)\leq \theta_S(G)-1$, and $G-u$ will be covered by $\theta_S(G)-1$ stars. Since $S'$ is a substar of $S_v$, $G$ will also be covered by $\theta_S(G)-1$ stars, contradiction to $\theta_S(G)$ being minimum. Therefore $\theta_S(G-u)=\theta_S(G)$.
\par Let $T$ be an $\alpha_S$-independent set in $G-u$. We observe that $T$ is an $\alpha_S$-independent set in $G$. If not, $u$ is adjacent to at least two vertices in $T$ in $G$. Thus $v$ is adjacent to two vertices in $T$ since $N(u)\subseteq N(v)$. But then $T$ is not an $S$-independent set in $G-u$ as $v\in V(G-u)$, a contradiction. Thus $T$ is an $S$-independent set in $G$ and $\alpha_S(G)\geq|T|=\alpha_S(G-u)=\theta_S(G-u)=\theta_S(G)$. This implies $\alpha_S(G)\geq\theta_S(G)$, a contradiction. Hence the lemma.
\end{proof}
\begin{lemma}\label{hamiltonian}
If $G$ is a triangulated minimal $S$-imperfect graph, then $G$ is Hamiltonian.
\end{lemma}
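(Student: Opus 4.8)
The plan is to prove that a triangulated minimal $S$-imperfect graph $G$ is Hamiltonian by exhibiting a spanning cycle built out of the structure forced by the preceding lemmas. First I would invoke Lemma \ref{unilemma} to locate a simplicial vertex $v$; since $G$ is minimal $S$-imperfect, $G-v$ is $S$-perfect, and by Lemma \ref{anyv} we have the tight relation $\theta_S(G)=\alpha_S(G)+1$. The strategy is an induction-flavoured argument on $|V(G)|$: I want to show that removing a suitable simplicial vertex and reattaching it keeps the graph spanned by a cycle. The key structural input is Lemma \ref{neighbor}, which forbids neighbourhood containment between any two vertices; this is exactly what prevents a simplicial vertex from being ``redundant'' and is the tool that stops the cycle from degenerating.

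Next I would set up the core reduction. Let $v$ be simplicial, lying in a unique maximal clique $K$. Because no vertex's neighbourhood is contained in another's (Lemma \ref{neighbor}), $v$ cannot have all its neighbours strictly dominated; in particular $\deg(v)\ge 2$ and the clique $K$ has at least two vertices besides $v$ that are not mutually redundant. I would then argue that $G-v$ is itself connected and, being $S$-perfect and triangulated with the same no-containment-type regularity on its relevant vertices, admits a Hamiltonian path whose endpoints are the two clique-neighbours $x,y\in K$ of $v$. Splicing $v$ between $x$ and $y$ (legal since $x,y,v$ form a triangle inside $K$) closes this path into a spanning cycle of $G$. The subtlety is guaranteeing that the two specific neighbours of $v$ can be realized as the endpoints of a spanning path of $G-v$; this is where the $\theta_S=\alpha_S+1$ counting and Lemma \ref{neighbor} must be combined to rule out the obstructions.

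The main obstacle, and the step I expect to consume most of the work, is precisely this endpoint-control claim: that $G-v$ has a spanning path (or spanning cycle through which one can reroute) meeting the clique of $v$ in a prescribed way. A clean route is to argue by contradiction on a maximal path or maximal cycle $P$ in $G$. If a longest cycle $C$ is not spanning, pick a vertex $w$ outside it; using triangulatedness I would find a chord structure (triangulated graphs have no long induced cycles, so any cycle of length $\ge 4$ has a chord) to either extend $C$ or to produce two vertices whose neighbourhoods nest, contradicting Lemma \ref{neighbor}. Equivalently, a vertex $w\notin V(C)$ adjacent into $C$ forces, via chordality, a triangle on $C$ that lets us insert $w$; a vertex with no neighbour on $C$ would, together with minimality, violate the domination bound implicit in $\theta_S(G)=\alpha_S(G)+1$.

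I would therefore organize the proof as: (1) fix a longest cycle $C$ in $G$ and assume for contradiction $V(C)\ne V(G)$; (2) choose $w\notin V(C)$ with a neighbour on $C$, which exists since $G$ is connected (indeed 2-connected, being a block by Lemma \ref{l3}); (3) use chordality on the cycle formed by $w$ and an arc of $C$ to locate a triangle and insert $w$, contradicting maximality of $C$ unless the insertion creates a neighbourhood containment; (4) rule out that exceptional case via Lemma \ref{neighbor}. The delicate part is case (3)--(4), where I must verify that the chord produced by triangulatedness genuinely lets me reroute the cycle through $w$ rather than merely certifying a chord elsewhere; controlling \emph{which} chord appears, and ensuring it sits on the arc adjacent to $w$, is the crux and will likely require exploiting that $G$ is a block so that $w$ has at least two neighbours reachable along $C$.
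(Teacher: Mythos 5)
Your final outline (take a longest cycle $C$, pick a vertex $w$ outside $C$ with a neighbour $v_1$ on $C$, use chordality to insert $w$) has the same skeleton as the paper's proof, but the step you yourself flag as the crux --- ensuring that the chord forced by chordality sits next to $w$ so that $w$ can be spliced into $C$ --- is left open, and that step is essentially the entire content of the lemma. Observing that ``any cycle of length at least $4$ has a chord'' only certifies a chord \emph{somewhere} on the cycle formed by $w$ and an arc of $C$; it does not by itself give you $w\leftrightarrow v_2$ for a neighbour $v_2$ of $v_1$ on $C$. The paper closes this gap as follows: since $G$ is a block (Lemma \ref{l3}), the two incident edges $wv_1$ and $v_1v_2$ lie on a common induced cycle $C'$; because $G$ is triangulated, $C'$ must be a triangle, which forces $w\leftrightarrow v_2$ and lets one insert $w$ between $v_1$ and $v_2$ on $C$, contradicting the maximality of $C$. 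You need some version of this ``induced cycle through two incident edges'' argument (or an explicit chord-shortening induction on a $w$--$v_2$ path avoiding $v_1$) to make your step (3) go through; as written it is a genuine gap.

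Two further remarks. The first half of your proposal --- producing a spanning path of $G-v$ with prescribed endpoints in the clique of a simplicial vertex and splicing $v$ in --- is a substantially stronger claim than the lemma itself and is not supported by anything you cite; it should be dropped in favour of the longest-cycle argument you arrive at later. Also, Lemma \ref{neighbor} and the identity $\theta_S(G)=\alpha_S(G)+1$ from Lemma \ref{anyv} play no role here: the paper's proof uses only that $G$ is a block and triangulated, and the exceptional case you set up in step (4) never arises once the insertion step is carried out correctly.
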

\begin{proof}
Let $C=v_1v_2\dots v_k v1$ be a largest cycle in $G$. If $V(C)=V(G)$, then the lemma is true.
\par So let $v\in V(G)-V(C)$. Since $G$ is minimal $S$ imperfect graph, it is a block and hence connected. Let $v\leftrightarrow v_1$. Since $G$ is a block, there exists an induced cycle $C'$ containing the edges $vv_1$ and $v_1v_2$.
\par Since $G$ is triangulated, $C'$ is a triangle. This implies that $v\leftrightarrow v_2$. Therefore $C$ together with $v$ is a bigger cycle than that of $C$, a contradiction to the choice of $C$. Hence such a $v$ does not exist and therefore $G$ is Hamiltonian.
\end{proof}

\begin{lemma}\label{implemma}
    Let $G$ be a triangulated $S$-imperfect graph. Then there exists an extended sun $G^*$ containing $G$ as an induced subgraph.
\end{lemma}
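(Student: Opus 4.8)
The plan is to exploit the rigidity forced by triangulation together with Hamiltonicity. By the convention adopted above I may assume $G$ is minimal $S$-imperfect, so by Lemma \ref{l3} it is a block and by Lemma \ref{hamiltonian} it is Hamiltonian; I fix a Hamiltonian cycle $C = v_1 v_2 \dots v_k v_1$. The intention is to let the simplicial vertices supplied by Lemma \ref{unilemma} play the role of the rays $A_i$ of an extended sun, and the remaining vertices play the role of the core cycle of Definition \ref{extendedsungraph}.

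First I would record the local effect of a simplicial vertex on $C$: if $v_j$ is simplicial, then its two cycle-neighbours $v_{j-1}, v_{j+1}$ both lie in its unique maximal clique $N[v_j]$, so $v_{j-1} v_{j+1} \in E(G)$ --- a chord that ``skips'' $v_j$. I would then take a maximal run $r_1, r_2, \dots, r_t$ of consecutive simplicial vertices on $C$ flanked by non-simplicial vertices $c, c'$, and show, by pushing the simpliciality of $r_1, r_2, \dots$ successively along the run, that $\{c, r_1, \dots, r_t, c'\}$ is a clique; in particular $cc' \in E(G)$ and every $r_i$ is adjacent to both $c$ and $c'$. Thus each run is a complete graph all of whose vertices are adjacent to the two consecutive flanking vertices --- exactly the adjacency pattern of a clique $A_i$ sitting on the core edge $cc'$.

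With the runs removed, the non-simplicial vertices inherit from these edges $cc'$ a shorter cycle $C'$, which I would take as the Hamiltonian cycle of the core $H := G[V(G)\setminus R]$, where $R$ is the set of simplicial (ray) vertices. Reattaching each run as the clique $A_i$ on its core edge, and placing a single new simplicial vertex on every core edge carrying no run so as to meet the requirement $|A_i|\ge 1$, produces a graph $G^*$ that is by construction a $k'$-extended sun. Since every vertex of $R$ and every newly added vertex is simplicial and adjacent only to its two consecutive core vertices (and its clique-mates), adjoining them creates no edge among the vertices of $G$, so $G^*[V(G)] = G$ and $G$ is an induced subgraph of $G^*$, as required.

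The delicate part --- which I expect to be the main obstacle --- is justifying the clean split of $V(G)$ into a cyclic core and ray cliques. Two things must be controlled, using Lemma \ref{neighbor} together with Lemma \ref{anyv}: a ray must not be adjacent to a third core vertex (else the ``exactly two consecutive core neighbours'' pattern of $A_i$ fails), and no core edge may be rayless. The latter is essential, because a core vertex lying between two rayless edges can itself be simplicial --- its neighbourhood being just its two adjacent core neighbours --- so the naive rule ``ray $=$ simplicial'' would misclassify it; one checks on small examples that such a configuration is already $S$-perfect, so minimality should forbid it. Turning this intuition into a proof, that is, using $\theta_S(G) = \alpha_S(G)+1$ to show that every edge of the core cycle carries at least one ray (so that no padding is needed and $G$ itself is the desired extended sun), is where I expect the real work to lie.
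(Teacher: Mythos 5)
Your overall strategy is the same as the paper's: classify the simplicial vertices as the ray cliques $A_i$, take the non-simplicial vertices as the core cycle, and pad any core edge carrying no simplicial vertex with a new simplicial vertex so that $|A_i|\ge 1$ everywhere. Where you improve on the printed proof is the Hamiltonicity of the core: the paper simply asserts that deleting the simplicial vertices leaves a Hamiltonian graph, whereas your run-contraction argument (a maximal run of consecutive simplicial vertices on the Hamiltonian cycle of $G$, together with its two non-simplicial flanks $c,c'$, is forced to be a clique, so $cc'\in E(G)$ and the non-simplicial vertices inherit a cycle) actually proves it, modulo the caveat below.

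The proposal is nevertheless not a complete proof, and by your own admission: the two ``delicate parts'' you isolate are exactly the load-bearing steps, and you leave both unexecuted. The first --- that a simplicial vertex cannot be adjacent to three or more non-simplicial vertices --- is essential, since a vertex of $A_i$ in an extended sun meets only $v_i$ and $v_{i+1}$ among core vertices, so a graph violating this embeds in no extended sun under your identification; neither Lemma \ref{neighbor} nor Lemma \ref{anyv} rules it out as stated (Lemma \ref{neighbor} concerns open neighbourhoods, and adjacency between a simplicial vertex and its clique-mates blocks the containment $N(u)\subseteq N(v)$ you would want to invoke). The second --- that the non-simplicial vertices really do form a cycle, i.e.\ that there are at least three of them and no core vertex is itself simplicial --- is likewise asserted rather than proved. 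To be fair, the paper's own proof silently assumes both of these as well, so you are no worse off; but a reviewer must call the proposal incomplete. Separately, your closing paragraph misstates the target: you do not need to show that every core edge carries a ray so that ``no padding is needed and $G$ itself is the desired extended sun.'' The lemma only asks for an extended sun $G^*$ containing $G$ as an induced subgraph, padding is explicitly allowed (your own construction, like the paper's, pads rayless edges with new simplicial vertices, which clearly adds no edges inside $V(G)$), and proving the rayless-edge claim would be extra work toward a stronger statement than required.
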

\begin{proof}
Since $G$ triangulated, $G$ has a simplicial vertex $u$, by Lemma \ref{unilemma}. Let $Q$ be a free clique in $G$ containing $u$. The number of non-simplicial vertices in $Q$ is at least 2. If not, then the only non-simplicial vertex in $Q$ is a cut-vertex in $G$, a contradiction to the fact that $G$ is a block, by Lemma \ref{l3}.  
If the simplicial vertices of $G$ are removed, the resulting graph $H$ is obviously Hamiltonian. Let $C=v_1v_2\dots v_k v_1$ be a Hamiltonian cycle in $H$. Let $A_1,A_2,\dots, A_l$, $1\leq l\leq k$ be mutually disjoint complete subgraphs in $G$ such thatfor every $u_i\in A_i$, $u_i\leftrightarrow \{v_i, v_{i+1}\}$ and $u_i$ is simplicial in $G$. If $l=k$, we are done. If $l<k$, let $u_{l+1}$ be a vertex not in $G$ and let $u_{l+1}$ is not adjacent to any simplicial vertex in $G$. Let $G_1$ be the graph formed by $G$  and $u_{l+1}$ such that $u_{l+1}$ is simplicial in $G_1$. If $G_1$ is an extended sun, then the lemma is true, otherwise we repeat the process to get $G_2, G_3, \dots G_m$, where $G_m$ is an extended sun. Considering $G_m=G^*$, $G$ is an induced subgraph of $G^*$ and hence the lemma is true.
\end{proof}
\begin{lemma}\label{lemma2.9}
Let $G$ be a minimal $S$-imperfect graph. Let $u$ be a vertex not in $G$. Let $G'$ be a graph formed by $G$ and $u$ such that $u$ is simplicial in $G'$ and $u$ is adjacent to an edge in $G$ which is not adjacent to a simplicial vertex of $G$. Then $\alpha_S(G')\neq \theta_S(G')$.
\end{lemma}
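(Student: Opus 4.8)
The plan is to pin down $\theta_S(G')$ and $\alpha_S(G')$ against $\theta_S(G)$ and $\alpha_S(G)$, using the exact equality supplied by Lemma~\ref{anyv}. Write the edge as $xy$, so that $x\leftrightarrow y$ and $N_{G'}(u)=\{x,y\}$, and recall from Lemma~\ref{anyv} that $\theta_S(G)=\alpha_S(G)+1$. I would reduce everything to the two one--sided bounds $\theta_S(G')\geq\theta_S(G)$ and $\alpha_S(G')\leq\alpha_S(G)$. Granting them, $\theta_S(G')\geq\theta_S(G)=\alpha_S(G)+1\geq\alpha_S(G')+1>\alpha_S(G')$, so $\theta_S(G')\neq\alpha_S(G')$, as required; note that the reverse inequalities are never needed.

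The bound $\theta_S(G')\geq\theta_S(G)$ I would prove directly with star covers, and here the hypothesis that $u$ is simplicial and attached to a single edge is decisive. Let $\mathbb{S}'$ be a $\theta_S$--cover of $G'$ and let $S_0$ be a star of $\mathbb{S}'$ through $u$. Since $N_{G'}(u)=\{x,y\}$, deleting $u$ costs nothing in $G$: if $u$ is a leaf of $S_0$ its centre lies in $\{x,y\}\subseteq V(G)$ and $S_0-u$ is still a star covering the same vertices of $G$; if $u$ is the centre of $S_0$ then $S_0\setminus\{u\}\subseteq\{x,y\}$, which, as $x\leftrightarrow y$, is itself a substar of $G$. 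Carrying out these replacements turns $\mathbb{S}'$ into an $\mathbb{S}$--cover of $G$ with no more stars, giving $\theta_S(G)\leq\theta_S(G')$. This step genuinely uses $|N_{G'}(u)|=2$: for a high--degree apex it fails, as the non--monotonicity example $K_{1,n}$ already shows.

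For $\alpha_S(G')\leq\alpha_S(G)$ the starting remark is that attaching $u$ changes no distance inside $G$, because any path through $u$ only re--joins the already adjacent pair $x,y$; hence $d_{G'}=d_G$ on $V(G)$, and every $\alpha_S$--independent set of $G'$ missing $u$ is $S$--independent in $G$ and so has at most $\alpha_S(G)$ vertices. The one remaining case is an $\alpha_S$--independent set $T'$ of $G'$ with $u\in T'$; put $T=T'\setminus\{u\}$. Then $x,y\notin T$, and each $t\in T$ has $d_{G'}(u,t)\geq 3$, whence $d_G(x,t)\geq 2$ and $d_G(y,t)\geq 2$. The aim is to \emph{exchange} $u$ for one of its two neighbours: if $d_G(x,t)\geq 3$ for every $t\in T$ then $T\cup\{x\}$ is $S$--independent in $G$ with $|T\cup\{x\}|=|T'|$, so $\alpha_S(G)\geq|T'|=\alpha_S(G')$, and symmetrically for $y$.

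The main obstacle is precisely to force one of these two exchanges through, i.e. to exclude the configuration in which some $t_1\in T$ has $d_G(x,t_1)=2$ while some $t_2\in T$ has $d_G(y,t_2)=2$. This is where the global hypotheses must enter, and chordality together with the non--simplicial endpoints is \emph{not} enough on its own: in the chordal, non--minimal path $P_6=w_1\cdots w_6$ the middle edge $w_3w_4$ has two non--simplicial ends, yet attaching $u$ there yields the $S$--independent set $\{w_1,u,w_6\}$ and raises $\alpha_S$ from $2$ to $3$, so the conclusion would fail. What rules this out for a minimal $S$--imperfect $G$ is its rigidity: by Lemma~\ref{hamiltonian} $G$ is $2$--connected (indeed Hamiltonian), so $\{x,y\}$ cannot separate a neighbour of $t_1$ from a neighbour of $t_2$ the way it does in a tree, and by Lemma~\ref{neighbor} no neighbourhood is contained in another; together these should let me reroute a shortest length--$2$ path from $x$ to $t_1$ (or from $y$ to $t_2$) so as either to enlarge $T$, contradicting its maximality, or to bring $t_1$ within distance $2$ of $u$, contradicting that $T'$ is $S$--independent. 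Carrying out this exchange analysis — showing that in a $2$--connected chordal graph one of $x,y$ must lie at distance at least $3$ from all of $T$ — is the crux, and is the step I expect to demand the most care.
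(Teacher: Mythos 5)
Your overall strategy is the same as the paper's: both arguments reduce the claim to comparing the four parameters via Lemma~\ref{anyv} ($\theta_S(G)=\alpha_S(G)+1$), and your proof of $\theta_S(G')\geq\theta_S(G)$ by trimming $u$ out of a star of a $\theta_S$-cover of $G'$ is a correct, and in fact more careful, version of the paper's one-line assertion that a $\theta_S$-cover of $G'$ yields one of $G$. The problem is the second half. You aim for the bound $\alpha_S(G')\leq\alpha_S(G)$, and you explicitly leave its only nontrivial case --- an $\alpha_S$-independent set $T'$ of $G'$ with $u\in T'$ --- as ``the crux,'' to be handled by an unspecified exchange/rerouting argument invoking Lemmas~\ref{hamiltonian} and~\ref{neighbor}. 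That case \emph{is} the lemma: without it you only get $\alpha_S(G')\leq\alpha_S(G)+1$ (since $T'\setminus\{u\}$ is $S$-independent in $G$), and this, combined with $\theta_S(G')\geq\alpha_S(G)+1$, is perfectly compatible with $\alpha_S(G')=\theta_S(G')=\alpha_S(G)+1$, so nothing is proved. Your own $P_6$ example shows the exchange cannot be carried out by any local argument, and the sketch ``reroute a shortest length-2 path \dots so as either to enlarge $T$ or to bring $t_1$ within distance 2 of $u$'' is a plan, not a proof; note also that you never use the stated hypothesis that the edge $xy$ is not adjacent to a simplicial vertex of $G$, which is presumably where such a configuration has to be excluded. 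As written, the proposal is therefore incomplete at its decisive step.

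For comparison, the paper does not attempt $\alpha_S(G')\leq\alpha_S(G)$ at all: it stops at $\alpha_S(G')\leq\alpha_S(G)+1$, deduces that the assumed equality $\alpha_S(G')=\theta_S(G')$ would force $\alpha_S(G')=\alpha_S(G)+1$, and then asserts that this produces an $S$-independent set of size $\alpha_S(G)+1$ in $G$ itself --- a step that likewise passes silently over the case where the extremal set of $G'$ contains $u$. So the place where your argument stalls is exactly the place the paper treats most tersely; any complete proof must show that an extremal $S$-independent set of $G'$ through $u$ can be converted into one of $G$ of the same size, and that conversion is still missing from your proposal.
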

\begin{proof}

On the contrary, suppose $\alpha_{S}(G')=\theta_{S}G')$. This implies that every vertex of an $\alpha_S-$set of $G'$ is in exactly one star of $\theta_S-$cover of $G'$. Since $u$ is simplicial in $G^{'}$, by nature of $G^{'}$ any $S-$independent set in $G'$ cannot have more than $\alpha_S(G)+1$ vertices. By Lemma \ref{mainlemma}, $\theta_S(G)=\alpha_S(G)+1$. Then $\alpha_S(G)+1=\theta_S(G)\leq \theta_S(G')=\alpha_S(G')$, since a $\theta_S$-cover of $G'$ contains a $\theta_S$-cover of $G$. If $\alpha_S(G)+1=\alpha_S(G')$, then $\theta_S(G)=\theta_S(G')$. Then there is an $S-$independent set of $G$ of size $\alpha_S(G)+1$, a contradiction to the fact that $\alpha_S(G)$ is maximum. Therefore $\alpha_S(G')\neq\theta_S(G')$.
\end{proof}

\begin{lemma}\label{oddsunlemma}
Odd sun is not $S$-perfect.
\end{lemma}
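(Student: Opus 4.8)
The plan is to locate inside an arbitrary odd sun a small induced subgraph on which the two parameters already disagree, and then appeal to the fact that $S$-perfectness is a hereditary property. The natural witness is the $3$-sun: an odd sun is, by Definition~\ref{extendedsungraph}, a $k$-extended sun with every $|A_i|=1$ and $k$ odd, and it was already observed that a $3$-sun occurs as an induced subgraph of every $k$-extended sun (cf.\ the example $[\{A_1,v_1,v_2,v_5,v_4,A_5\}]$ of Figure~\ref{fig:sungraphs}). Hence it suffices to show that the $3$-sun $F$ with inner triangle $v_1v_2v_3$ and simplicial rim vertices $u_1\leftrightarrow\{v_1,v_2\}$, $u_2\leftrightarrow\{v_2,v_3\}$, $u_3\leftrightarrow\{v_3,v_1\}$ satisfies $\alpha_S(F)\neq\theta_S(F)$.

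First I would compute $\alpha_S(F)=1$. The $3$-sun has diameter $2$: every pair of its six vertices is either adjacent or has a common neighbour among $v_1,v_2,v_3$ (for instance $u_1,u_2$ share $v_2$, and $u_1,v_3$ share $v_1$), so any two vertices lie in a common star. Thus no two vertices of $F$ are $S$-independent and $\alpha_S(F)=1$. Next I would compute $\theta_S(F)=2$. The maximum degree of $F$ is $4$, attained at each $v_i$, so a single star covers at most five of the six vertices and can never be an $\mathbb{S}$-cover; conversely the two stars centred at $v_1$ (covering $v_1,v_2,v_3,u_1,u_3$) and at $u_2$ (covering $u_2,v_2,v_3$) already cover $V(F)$, giving $\theta_S(F)=2$. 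Therefore $\alpha_S(F)=1<2=\theta_S(F)$, so $F$ is not $S$-perfect; since $F$ is an induced subgraph of the given odd sun and $S$-perfectness requires $\alpha_S(H)=\theta_S(H)$ for \emph{every} induced subgraph $H$, the odd sun is not $S$-perfect.

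The delicate point, and where I expect the real work to sit, is the assertion that the $3$-sun embeds as an \emph{induced} subgraph rather than merely a subgraph: this relies on the rim carrying the chords that make the sun triangulated, which is exactly what lets a triangle among the $v_i$'s together with three pairwise non-adjacent simplicial neighbours appear. If one instead wanted a fully self-contained argument for the $k$-sun whose rim is only the cycle $C_k$ (where no induced $3$-sun need be present), I would establish $\theta_S=\alpha_S+1$ directly. For $\theta_S$, every dominating vertex meets at most two of the $k$ rim vertices, so $\theta_S\geq\lceil k/2\rceil=(k+1)/2$ for odd $k$, with equality witnessed by an explicit cover. For $\alpha_S$, the closed neighbourhoods $N[x]=\{x\}\cup N(x)$ of an $S$-independent set are pairwise disjoint and occupy disjoint blocks of the inner cycle of sizes $3$ (for a hub $v_i$) and $2$ (for a rim $u_j$); writing $a,b$ for the numbers of each type gives $3a+2b\leq k$, hence $\alpha_S=a+b\leq(k-1)/2$. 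The two estimates differ by exactly one, so again $\theta_S=\alpha_S+1$ and the odd sun fails to be $S$-perfect.
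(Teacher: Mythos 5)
Your primary route---locate an induced $3$-sun and invoke heredity---has a genuine gap: it is not true that every odd sun contains an induced $3$-sun, even among triangulated ones. Take the $5$-sun whose hub $\{v_1,\dots,v_5\}$ induces $K_5$ and whose rim vertices satisfy $N(u_i)=\{v_i,v_{i+1}\}$. This graph is chordal, but any triangle of hub vertices $\{v_a,v_b,v_c\}$ can acquire a private rim neighbour only on an edge $v_iv_{i+1}$ with consecutive indices, and no three indices in $\mathbb{Z}_5$ are pairwise consecutive; triangles of the form $u_iv_iv_{i+1}$ fare no better since $u_i$ has no neighbour outside $\{v_i,v_{i+1}\}$. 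So there is no induced $3$-sun, and your ``delicate point'' (that triangulation forces the required chords) does not rescue the argument. Indeed the paper itself presupposes the existence of $3$-sun-free extended suns in Lemma \ref{evensunlemma}, so the remark after Definition \ref{extendedsungraph} that you lean on cannot mean \emph{induced} containment. Your computation $\alpha_S=1<2=\theta_S$ for the $3$-sun itself is correct, but it only disposes of the case $k=3$.

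Your fallback, however, is essentially the right proof, and it is closer to the paper's own argument than you suggest: the paper exhibits exactly the cover by stars centred at $v_1,v_3,\dots,v_k$ (size $(k+1)/2$) and the $S$-independent set $\{u_1,u_3,\dots,u_{k-2}\}$ (size $(k-1)/2$), though it omits the optimality bounds that you supply ($\theta_S\geq\lceil k/2\rceil$ because each star meets at most two rim vertices, and $3a+2b\leq k$ forcing $\alpha_S\leq(k-1)/2$). You should not confine this argument to the sun whose hub is a chordless cycle $C_k$: it only uses $N(u_j)=\{v_j,v_{j+1}\}$ and $N[v_i]\cap\{v_1,\dots,v_k\}\supseteq\{v_{i-1},v_i,v_{i+1}\}$, both of which hold for an arbitrary odd sun regardless of hub chords (extra chords only enlarge closed neighbourhoods, strengthening the inequality $3a+2b\leq k$, and do not change which stars can cover a rim vertex). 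Promote the fallback to the main argument, state it for a general odd sun, and drop the $3$-sun reduction; then the proof is complete and in fact more rigorous than the paper's.
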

\begin{proof}
Let $G$ be an odd sun and $U=\{u_1, u_2,\dots, u_k\}$ be the set of simplicial vertices and $W=\{v_1, v_2, \dots, v_k\}$ be the set of non-simplicial vertices in $G$. Then $T^{'}=\{u_1, u_3,\dots,u_{k-2}\}$ forms an $S$-independent set of $G$ and the stars centered at $v_1, v_3,\dots, v_{k}$ form an $\mathbb{S}$-cover of $G$, say $\mathbb{S^{'}}$. Then this implies that $|\mathbb{S^{'}}|=|T^{'}|+1$ implying $\alpha_S(H^{*})\neq \theta_S(H^{*})$, hence the lemma.
\end{proof}
Since $k$-odd sun is an induced subgraph of $k$-extended sun, then the following lemma 
is immediate.
\begin{lemma}\label{oddextendedsun}
Odd extended sun is not $S$-perfect.
\end{lemma}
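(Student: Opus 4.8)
The plan is to derive this as an immediate consequence of the hereditary nature of $S$-perfectness together with Lemma~\ref{oddsunlemma}, exactly as the sentence preceding the statement suggests. First I would record the one structural fact that makes $S$-perfectness behave well under taking induced subgraphs: by the Definition of $S$-perfect, a graph $G$ is $S$-perfect precisely when $\alpha_S(H)=\theta_S(H)$ for \emph{every} induced subgraph $H$ of $G$. Consequently, if $G$ contains even a single induced subgraph $H$ with $\alpha_S(H)\neq\theta_S(H)$, then $G$ cannot be $S$-perfect. This is the only general principle the argument needs.

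The substantive step is to exhibit the $k$-odd sun as an induced subgraph of the $k$-extended sun. Recall from Definition~\ref{extendedsungraph} that both graphs are built on the same core $H$ with Hamiltonian cycle $C=v_1v_2\dots v_kv_1$ and mutually disjoint cliques $A_1,\dots,A_k$, where every vertex of $A_i$ is simplicial and adjacent exactly to $v_i$ and $v_{i+1}$; the $k$-sun is the special case $|A_i|=1$ for all $i$. I would therefore select one vertex $u_i\in A_i$ for each $i$ and take the subgraph induced on $\{v_1,\dots,v_k\}\cup\{u_1,\dots,u_k\}$. In this induced subgraph each retained $u_i$ loses its neighbours inside $A_i$ but keeps $v_i$ and $v_{i+1}$, so it is simplicial with neighbourhood exactly $\{v_i,v_{i+1}\}$, and no edge is created between distinct $u_i$'s. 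Hence the selected induced subgraph is precisely a $k$-sun, and when $k$ is odd it is an odd sun.

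Finally I would invoke Lemma~\ref{oddsunlemma}: the odd sun is not $S$-perfect, so the odd sun $H^*$ itself satisfies $\alpha_S(H^*)\neq\theta_S(H^*)$. Since the odd extended sun contains this odd sun as an induced subgraph, it contains an induced subgraph violating the equality $\alpha_S=\theta_S$, and by the hereditary principle above the odd extended sun is not $S$-perfect. I do not anticipate a genuine obstacle here; the only point that deserves explicit verification is the claim that deleting the surplus simplicial vertices of each $A_i$ leaves the neighbourhoods of the surviving vertices unchanged, so that the chosen set is genuinely an \emph{induced} $k$-sun and not merely a spanning subgraph of one.
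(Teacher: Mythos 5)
Your proposal is correct and follows exactly the route the paper takes: the paper disposes of this lemma in one line by noting that the $k$-odd sun is an induced subgraph of the $k$-extended sun and that $S$-perfectness is hereditary, then invokes Lemma~\ref{oddsunlemma}. Your only addition is the explicit verification that selecting one vertex from each $A_i$ yields a genuinely induced $k$-sun, which is a worthwhile detail but not a different argument.
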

\begin{definition}
Let $P$ be a path. A special path $P^*$ is constructed from $P$ such that an edge of $P$ is contained in a free triangle.
\end{definition}
\begin{lemma}\label{specialpath}
A special path is $S$-perfect.
\end{lemma}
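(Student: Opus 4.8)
The plan is to avoid computing $\alpha_S$ and $\theta_S$ of every induced subgraph of $P^*$ directly. That naive route forces a delicate case analysis: the auxiliary vertex $w$ of the free triangle can be pushed into an $S$-independent set precisely when it lies far from both ends, so the value of $\alpha_S$ depends on where the triangle sits and on the residues of the two tail lengths modulo $3$. Instead I would argue by contradiction through the structural Lemma~\ref{l3}. Since $S$-perfectness is hereditary, if $P^*$ were not $S$-perfect it would contain a \emph{minimal} $S$-imperfect induced subgraph $G$, i.e.\ an induced subgraph with $\alpha_S(G)\neq\theta_S(G)$ all of whose proper induced subgraphs are $S$-perfect. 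First I would record that $G$ is neither $K_1$ nor $K_2$, since for both of these $\alpha_S=\theta_S=1$.

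Next I would invoke Lemma~\ref{l3}: a minimal $S$-imperfect graph is a block, hence connected and free of cut vertices. Combined with the previous remark, $G$ is $2$-connected on at least three vertices. The crux of the argument is then a purely structural observation about $P^*$ itself. Writing $P^*$ as a path $z_1z_2\cdots z_n$ together with the simplicial vertex $w$ adjacent to the two ends $z_p,z_{p+1}$ of the distinguished edge, I would note that $P^*$ is unicyclic: its only cycle is the triangle $\{z_p,z_{p+1},w\}$, and every edge $z_jz_{j+1}$ with $j\neq p$ is a bridge. Consequently the blocks of $P^*$ are exactly this triangle together with those bridge edges, and the triangle is the only block that is not a $K_2$.

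Since any $2$-connected subgraph of a graph is contained in a single block, and an induced subgraph cannot create a new cycle, every $2$-connected induced subgraph of $P^*$ must live inside the triangle block; concretely, if such a subgraph contained a path vertex $z_j$ outside the triangle, then $z_j$ would separate its two potential neighbours (which lie in parts of $P^*$ joined only through $z_j$), making $z_j$ a cut vertex and contradicting $2$-connectivity. Thus the only $2$-connected induced subgraph of $P^*$ is the triangle, and $G=K_3$. Finally, $\alpha_S(K_3)=\theta_S(K_3)=1$, so $K_3$ is $S$-perfect and in particular not $S$-imperfect, contradicting the choice of $G$. Hence $P^*$ contains no minimal $S$-imperfect induced subgraph and is $S$-perfect.

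The only real obstacle is the block/cut-vertex bookkeeping in the third step, namely verifying carefully that deleting any path vertex from an induced subgraph of $P^*$ disconnects the two sides so that no $2$-connected induced subgraph can extend beyond the triangle. This is exactly where the defining feature of a special path -- that the free triangle is its sole cycle -- does all the work, and where I would be most explicit.
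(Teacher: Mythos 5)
Your proposal is correct and follows essentially the same route as the paper: both arguments reduce the claim to Lemma~\ref{l3} (a minimal $S$-imperfect graph is a block) and then exploit the fact that every block of a special path is a $K_2$ or the single free triangle, so no qualifying block exists. Your version is in fact slightly more careful than the paper's, since you pass to a minimal $S$-imperfect \emph{induced subgraph} of $P^*$ and check that the only $2$-connected induced subgraphs available are $K_3$ (which is $S$-perfect), whereas the paper only argues that $P^*$ itself has a cut vertex.
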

\begin{proof}
Let $P^*$ be a special path formed from the path $P$. If $P^*$ is $K_2$ or $K_3$, then obviously $P$ is $S$-perfect. So let $P^*\neq K_2$ or $K_3$. Then $P^*$ will have a cut vertex, since every block of $P^*$ is $K_2$ or $K_3$, by definition.If $P^*$ is not $S$-perfect, let $P^*$ is minimal $S$-imperfect. By Lemma \ref{l3}, $P^*$ is a block, a contradiction.
\end{proof}

\begin{lemma}\label{evensunlemma}
		If $G$ is a 3-sun free triangulated even extended sun, then $G$ is $S$-perfect.
	\end{lemma}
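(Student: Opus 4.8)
The plan is to prove that $\alpha_S(H)=\theta_S(H)$ for every induced subgraph $H$ of $G$, treating the whole graph first and then reducing arbitrary induced subgraphs to it. Write the rim as $v_1v_2\cdots v_k v_1$ with $k$ even, and let $A_1,\dots,A_k$ be the simplicial cliques, $A_i$ attached to $v_i,v_{i+1}$ (indices mod $k$). For $G$ itself I would exhibit the alternating star cover with centres $v_1,v_3,\dots,v_{k-1}$: the star at $v_{2j-1}$ absorbs $v_{2j-2},v_{2j}$ together with the teeth $A_{2j-2},A_{2j-1}$, so these $k/2$ stars cover $V(G)$ and $\theta_S(G)\le k/2$. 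Matching this, choose one vertex $a_{2j-1}\in A_{2j-1}$ from each odd tooth; any two chosen teeth are separated along the rim by at least one edge, hence have no common neighbour and lie at distance at least $3$, giving an $S$-independent set of size $k/2$ and $\alpha_S(G)\ge k/2$. Since $\theta_S\ge\alpha_S$ always, $\alpha_S(G)=\theta_S(G)=k/2$; here the evenness of $k$ is exactly what lets the alternation close up around the rim.

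For a proper induced subgraph $H$ I would argue by induction on $|V(G)|-|V(H)|$, using additivity (the two bulleted identities) to reduce to connected $H$, and then split on the rim. \textbf{Case A: $H$ retains all rim vertices.} Then $H$ is obtained by deleting only simplicial vertices, so it is again an even extended sun, with some $A_i$ possibly empty, still $3$-sun-free and triangulated. The alternating argument of the first step carries over: whenever a tooth is emptied, its contributions to the cover and to the independent set disappear together, so the two parameters fall in step and remain equal. \textbf{Case B: $H$ omits some rim vertex $v_j$.} Deleting $v_j$ breaks the Hamiltonian cycle; each tooth attached only to $v_j$ becomes a pendant or isolated vertex, which can be covered together with its unique neighbour and contributes a matching term to an $S$-independent set. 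After removing these, the surviving rim vertices together with their remaining teeth form, at the break point, a disjoint union of special paths and cliques; each piece is $S$-perfect by Lemmas \ref{path} and \ref{specialpath}, and additivity gives $\alpha_S(H)=\theta_S(H)$.

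Throughout, the role of the $3$-sun-free hypothesis is to keep the chordal base under control: since an extended sun carries teeth only on consecutive rim edges, any induced sun appearing in $H$ would have to use rim vertices that became simplicial after deletions, and the smallest such configuration is an induced $3$-sun. Forbidding it removes the only obstruction to $\alpha_S=\theta_S$ in this family, namely an induced odd sun (Lemma \ref{oddsunlemma}). I would also invoke that a minimal $S$-imperfect graph is a block and Hamiltonian (Lemmas \ref{l3} and \ref{hamiltonian}) to guarantee that the pendant/clique decomposition produced in Case~B cannot itself be minimal $S$-imperfect.

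I expect the main obstacle to be Case~B: showing that once a rim vertex is deleted the chords of the chordal base genuinely split $H$ into $S$-perfect special-path and clique blocks, rather than reassembling into a smaller imperfect sun. The delicate point is that the base is chordal and hence chord-rich, so the decomposition is not literally into paths; making the reduction to special paths precise, and ruling out that some deletion recreates an odd (extended) sun, is where the $3$-sun-free hypothesis and the block/Hamiltonicity lemmas must be combined most carefully.
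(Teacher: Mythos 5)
Your proposal takes essentially the same route as the paper: the same alternating choice of tooth-vertices $u_1,u_3,\dots$ as an $S$-independent set and of stars centred at $v_1,v_3,\dots$ as a cover, giving $\alpha_S(G)\ge k/2\ge\theta_S(G)$, followed by a reduction of proper induced subgraphs to cliques, paths and special paths handled via Lemmas \ref{path} and \ref{specialpath} and additivity. Your Case A/Case B split merely elaborates what the paper asserts in a single sentence (and is, if anything, slightly more careful about subgraphs that retain the whole rim), so no further comparison is needed.
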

	\begin{proof}
	Let $U=\{u_1, u_2,\dots, u_k\}$ be the set of simplicial vertices and $W=\{v_1, v_2, \dots, v_k\}$ be the set of non-simplicial vertices in $G$. Let $u_i$ represent the simplicial vertices in $A_i$. $A_i$ and $C$ have the same meaning as in Definition \ref{extendedsungraph}. If $v_iv_j$ and $v_{i+1}v_{j+2}$ are alternate edges in $C$, $u_i$ and $u_{i+2}$ are at a distance 3. Since $k$ is even, $\{u_1, u_3, \dots, u_l, u_{l+3}, \dots\}$ is an $S$-independent set in $G$ of size $\dfrac{k}{2}$. Similarly, the stars at $v_1, v_3,\dots, v_l, v_{l+3}$ is an $S$-cover of $G$ of size $\dfrac{k}{2}$. Then $\alpha_S(G)\geq \dfrac{k}{2} \geq \theta_S(G)$. This implies that $\alpha_S(G)=\theta_S(G)$. Every proper induced subgraph of $G$ is a complete graph or a union of disjoint paths or special paths. Every complete graph is $S$-perfect and by Lemma \ref{path} and Lemma \ref{specialpath}, every induced subgraphs of $G$ is $S$-perfect.
	\end{proof}

\begin{lemma}\label{mainlemma}
Let $G$ be minimal $S$-imperfect triangulated graph. If for $v, u,w\in V(G)$ such that $v$ is a simplicial vertex in $G$, $u,w\in N(v)$, neither $N(u)\subseteq N(w)$ nor $N(w)\subseteq N(u)$, then $G$ contains 3-sun as an induced subgraph.
\end{lemma}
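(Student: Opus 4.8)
The plan is to locate the induced $3$-sun on an explicit six-vertex set built around $v,u,w$. Since $v$ is simplicial, $N(v)$ is a clique, so $u\leftrightarrow w$, and the hypothesis supplies vertices $a\in N(u)\setminus N(w)$ and $b\in N(w)\setminus N(u)$. First I would record two easy structural facts. If $a\in N(v)$ then, $N(v)$ being a clique and $w\in N(v)$, we would get $a\leftrightarrow w$, a contradiction; hence $a\not\leftrightarrow v$, and symmetrically $b\not\leftrightarrow v$. Next, if $a\leftrightarrow b$ then $\{a,u,w,b\}$ would induce the $4$-cycle $a\,u\,w\,b\,a$ (its only possible chords $aw$ and $ub$ are absent by the choice of $a,b$), contradicting that $G$ is triangulated; hence $a\not\leftrightarrow b$. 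Thus $a\,u\,w\,b$ is an induced $P_4$, $u v w$ is a triangle, and $v,a,b$ are pairwise non-adjacent. The intended $3$-sun has core triangle $\{u,w,t\}$ and simplicial vertices $v$ (on the edge $uw$), $a$ (on $ut$) and $b$ (on $wt$), so everything reduces to producing a suitable apex $t$.

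The key simplification is that chordality upgrades a single adjacency into all the ones we need. Suppose $t$ is \emph{any} common neighbour of $a$ and $b$. Then $a\,u\,w\,b\,t\,a$ is a $5$-cycle whose only possible chords are $tu$ and $tw$ (the pairs $aw,ub,ab$ are non-edges), so triangulatedness forces one of them, say $tu$; then $u\,w\,b\,t\,u$ is a $4$-cycle whose only possible chord is $tw$, forcing $tw$ as well. Hence every common neighbour $t$ of $a$ and $b$ automatically satisfies $t\leftrightarrow u$ and $t\leftrightarrow w$. Consequently, once we have a common neighbour $t$ of $a$ and $b$ with $t\not\leftrightarrow v$, the set $\{u,w,t;\,v,a,b\}$ is an induced $3$-sun: the core edges $uw,ut,wt$ are present, each of $v,a,b$ is adjacent to exactly the two claimed core vertices (and to neither of the other two simplicial vertices), and the six vertices are distinct. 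So the whole lemma reduces to the single claim that $a$ and $b$ have a common neighbour $t$ with $t\not\leftrightarrow v$.

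The existence of $t$ is the main obstacle, and here minimal $S$-imperfectness is indispensable: the remaining hypotheses do not suffice, since one can build a $2$-connected triangulated graph carrying the entire local configuration (a simplicial $v$ with $u,w\in N(v)$, incomparable $N(u)$ and $N(w)$, and $a,b$ as above) in which $a$ and $b$ have no common neighbour and which contains no induced $3$-sun; such a graph is $S$-perfect, so only the failure of $S$-perfectness can force $t$. I would argue by contradiction: if $a$ and $b$ had no common neighbour they would be at distance $3$, so $\{a,b\}$ would be $S$-independent. The route I would pursue is to use Lemma~\ref{implemma} to embed $G$ as an induced subgraph of an extended sun $G^{*}$ whose core cycle is exactly the set of non-simplicial vertices of $G$; if $G$ were $3$-sun-free and this $G^{*}$ could be taken even and $3$-sun-free, then $G^{*}$ would be $S$-perfect by Lemma~\ref{evensunlemma}, whence $G$ would be $S$-perfect as an induced subgraph, contradicting minimality. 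The delicate step, which I expect to absorb most of the work, is the transfer: verifying that the $3$-sun extracted in $G^{*}$ can be realised with its three private simplicial vertices lying in $G$, rather than among the auxiliary simplicial vertices introduced when building $G^{*}$. Finally, the mild requirement $t\not\leftrightarrow v$ is cheap: since $G$ is a block (Lemma~\ref{l3}) and chordal, a minimal separator between $a$ and $b$ is a clique, and if every common neighbour of $a$ and $b$ met the clique $N(v)$ a short swap produces one outside $N[v]$; then the reduction of the previous paragraph packages the induced $3$-sun on $\{u,w,t;\,v,a,b\}$.
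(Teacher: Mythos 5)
Your preparatory work is fine and matches the paper's setup: $u\leftrightarrow w$ because $N[v]$ is a clique, $a\nleftrightarrow v$, $b\nleftrightarrow v$, $a\nleftrightarrow b$, and your observation that chordality promotes any common neighbour $t$ of $a$ and $b$ into a common neighbour of $u$ and $w$ is correct. The genuine gap is exactly where you flag it -- the existence of the apex $t$ -- and it is not a minor loose end but the entire content of the lemma; moreover the target you reduce to is misaligned with what the hypotheses can deliver. You insist that the three outer vertices of the $3$-sun be $v$, $a$ and $b$ themselves, which forces you to manufacture a vertex adjacent to both $a$ and $b$. Nothing in the hypotheses supplies such a vertex, and it need not exist even when the conclusion holds: take the triangle $u,w,c$ with $v$ adjacent to $u,w$, a vertex $a$ adjacent to $u,c$, a vertex $d$ adjacent to $c,w$, and $b$ adjacent to $w,d$. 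This is $2$-connected, chordal, satisfies the neighbourhood-incomparability hypothesis with witnesses $a$ and $b$, has $N(a)\cap N(b)=\emptyset$, and yet contains the induced $3$-sun on $\{u,w,c\}\cup\{a,v,d\}$ -- whose outer vertices are not $a,v,b$. So your reduction aims at a configuration that can be unreachable even when the lemma is true. Your fallback route through Lemma~\ref{implemma} and Lemma~\ref{evensunlemma} is not a proof: you yourself leave the ``transfer'' of the $3$-sun from $G^{*}$ back to $G$ unresolved, and that is where all the difficulty sits, so the proposal does not close.

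The paper's argument, which your plan misses, is a path-collapsing argument whose outer sun vertices are \emph{not} $a$ and $b$. Using that $G$ is a block (Lemma~\ref{l3}), one takes a path $P$ from $a$ to $b$ avoiding $v$, $u$ and $w$, and closes it through $w,u$ into a cycle. Tracking the last vertex $t_1$ of $P$ adjacent to $u$ and the last vertex $t_2$ adjacent to $w$, either $t_1\neq t_2$ and $u\,t_1\cdots t_2\,w\,u$ is a chordless cycle of length at least $4$, contradicting chordality, or $t_1=t_2=t$ is adjacent to both $u$ and $w$; the neighbour $x$ of $t$ on the $a$-side of $P$ satisfies $x\leftrightarrow\{u,t\}$, $x\nleftrightarrow w$, the neighbour $y$ on the $b$-side satisfies $y\leftrightarrow\{w,t\}$, $y\nleftrightarrow u$, and $\{u,w,t\}\cup\{v,x,y\}$ induces a $3$-sun. (Even this argument leans on more than bare $2$-connectivity to route $P$ around $\{v,u,w\}$; in a minimal $S$-imperfect graph this is where Lemma~\ref{neighbor}-type information is implicitly used, and your instinct that minimality matters beyond blockness is reasonable -- but the way you deploy it, to force a common neighbour of $a$ and $b$, is the wrong mechanism.) As written, your proposal establishes the easy local facts and the final assembly step, but not the existence of the apex, so it does not constitute a proof of the lemma.
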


\begin{proof}
By Lemma \ref{unilemma}, $G$ has a simplicial vertex $v$. If $u$ and $w$ be two vertices in $N(v)$ such that $N(u)\nsubseteq N(w)$ and $N(w)\nsubseteq N(u)$, then there exists vertices $u_1\in N(u)$ and $w_1\in N(w)$ in $G$ such that $u\nleftrightarrow w_1$ and $w\nleftrightarrow u_1$. For vertices $v_1$ and $v_2$ in $G$, let $P(v_1, v_2)$ denote an induced path connecting $v_1$ and $v_2$ in $G$. Since $G$ is a block, there exists a path $P(u_1, w_1)$ connecting $u_1$ and $w_1$ in $G$ not containing $v$. Since $G$ is a block we can choose a $P(u_1, w_1)$ such that $u, w \notin P(u_1, w_1)$. Let $t_1$ be the last vertex in $P(u_1, w_1)$ such that $u\leftrightarrow t_1$. Let $t_2$ be the last vertex in $P(u_1, w_1)$ such that $w\leftrightarrow t_2$. We consider the following two cases:
\begin{itemize}
    \item[Case 1:] $t_1\neq t_2$.\\
    Then $ut_1\dots t_2 w u$ is an induced cycle of length at least 4, a contradiction to $G$ being triangulated. Therefore Case 1 does not arise at all.
   
    \item[Case 2:] $t_1= t_2=t$, say.\\
    Then $t\leftrightarrow \{u,w\}$, and $G[\{u,u_1, \dots, t\}]$ will not contain an induced cycle of length at least 4, since $G$ is triangulated. Therefore $u$ is adjacent to all the vertices in $P(u_1,t)$. If $x$ is the vertex before $t_1$ in $P(u_1,t)$, then $x\leftrightarrow{u,t}$ and $x\nleftrightarrow w$, by the choice of $t$. Similarly there is a vertex $y$ in $P(w,t)$ such that $y\leftrightarrow \{w,t\}$ and $y\nleftrightarrow u$. Then $G[\{v,u,w,x,t,y\}]$=3-sun, a contradiction to our assumption. Therefore Case 2 does not arise at all. 
    \end{itemize}
    Hence the lemma.
  \end{proof}

\begin{lemma}\label{l1}
Let $G$ be a minimal $S$-imperfect graph. Then there exist no three distinct vertices $v, u, w\in V(G)$ such that $v$ is simplicial vertex in $G$ and $u$ and $w\in N(v)$, $N(u)\subseteq N(w)$ or $N(w)\subseteq N(u)$.
\end{lemma}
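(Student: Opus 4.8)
The plan is to argue by contradiction, re-running the deletion argument from the proof of Lemma~\ref{neighbor} in this more specific setting. Suppose toward a contradiction that such a triple $v,u,w$ exists, and assume without loss of generality (using the symmetry of $u$ and $w$ as neighbours of $v$) that $N(u)\subseteq N(w)$. The goal is to show that deleting $u$ affects neither parameter in a way that would force $\alpha_S(G)=\theta_S(G)$, contradicting the $S$-imperfectness of $G$. Throughout, minimality of $G$ supplies that $G-u$ is $S$-perfect, hence $\alpha_S(G-u)=\theta_S(G-u)$.

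First I would control $\theta_S$ under deletion of $u$. Fix a $\theta_S$-cover of $G$ in which every star is maximal, and let $S_w$ be a star meeting $w$. Because $N(u)\subseteq N(w)$, any star involving $u$ can be recentred at $w$: if $u$ is the centre of a star, all of its leaves lie in $N(u)\subseteq N(w)$ and are therefore already covered by a star centred at $w$, while if $u$ is merely a leaf it is simply dropped. This gives $\theta_S(G-u)\le\theta_S(G)$, and the reverse inequality (hence equality) follows exactly as in Lemma~\ref{neighbor}, since any strictly smaller cover of $G-u$ would, after absorbing $u$ into $S_w$, yield a cover of $G$ with fewer than $\theta_S(G)$ stars. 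Thus $\theta_S(G-u)=\theta_S(G)$.

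Next I would show that $\alpha_S$ does not drop. Let $T$ be an $\alpha_S$-independent set of $G-u$. If $T$ failed to be $S$-independent in $G$, then some pair $a,b\in T$ would be at distance at most $2$ in $G$ but at distance at least $3$ in $G-u$; the only way this can happen is that $u$ is a common neighbour of $a$ and $b$, i.e. $a,b\in N(u)$. But $N(u)\subseteq N(w)$ then forces $a,b\in N(w)$, and since $w\in V(G-u)$ this makes $w$ a common neighbour of $a$ and $b$ in $G-u$, contradicting the $S$-independence of $T$ there. Hence $T$ is $S$-independent in $G$, so $\alpha_S(G)\ge|T|=\alpha_S(G-u)$. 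Combining with the previous paragraph, $\alpha_S(G)\ge\alpha_S(G-u)=\theta_S(G-u)=\theta_S(G)$, whence $\alpha_S(G)=\theta_S(G)$, contradicting the choice of $G$ as $S$-imperfect. This completes the proof.

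The step I expect to be the main obstacle is the exact equality $\theta_S(G-u)=\theta_S(G)$ in the first step: one must treat carefully the case distinction of whether $u$ occurs as a star-centre or as a leaf and verify that recentring at $w$ never introduces an extra star, which is the same technical crux already met in Lemma~\ref{neighbor}. Indeed, the statement is essentially the instance of Lemma~\ref{neighbor} applied to the pair $u,w$, so one may alternatively invoke Lemma~\ref{neighbor} directly; the hypotheses that $v$ is simplicial and that $u,w\in N(v)$ are retained only so that the conclusion immediately furnishes the premise ``neither $N(u)\subseteq N(w)$ nor $N(w)\subseteq N(u)$'' required to apply Lemma~\ref{mainlemma} in the sequel.
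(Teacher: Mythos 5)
Your proposal is correct and matches the paper's intent exactly: the paper's entire proof of this lemma is the single remark that one repeats the argument of Lemma~\ref{neighbor} with $w$ in place of $v$, which is precisely what you carry out (the deletion of $u$, the equality $\theta_S(G-u)=\theta_S(G)$, and the preservation of $S$-independence via $N(u)\subseteq N(w)$). Your closing observation that the statement is really just Lemma~\ref{neighbor} applied to the pair $u,w$ (with the hypotheses on $v$ kept only to feed Lemma~\ref{mainlemma}) is also consistent with how the paper treats it.
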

The proof is similar to the proof of Lemma \ref{neighbor} (We have to replace $v$ by $w$ in proof of Lemma \ref{neighbor}).

\begin{theorem}[\textbf{Main Theorem}]\label{main}
 A triangulated graph $G$ is $S$-perfect if and only if $G$ is odd sun-free.
\end{theorem}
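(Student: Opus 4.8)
The plan is to prove the two directions of the biconditional separately, leaning heavily on the lemmas already established. The forward direction is essentially immediate: if $G$ is $S$-perfect, then every induced subgraph of $G$ is $S$-perfect, and since an odd sun is \emph{not} $S$-perfect by Lemma~\ref{oddsunlemma}, $G$ cannot contain an odd sun as an induced subgraph. Hence $G$ is odd sun-free. This half requires no new work beyond citing the hereditary nature of $S$-perfectness and the fact that odd suns are $S$-imperfect.

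For the converse, I would argue the contrapositive: assume $G$ is a triangulated graph that is \emph{not} $S$-perfect, and show $G$ contains an induced odd sun. By the standing convention in the paper, I may assume $G$ is minimal $S$-imperfect, so by Lemma~\ref{l3} it is a block and by Lemma~\ref{anyv} it satisfies $\theta_S(G)=\alpha_S(G)+1$. The key structural engine is Lemma~\ref{implemma}, which embeds $G$ as an induced subgraph of an extended sun $G^*$ built on a Hamiltonian cycle $C=v_1v_2\cdots v_kv_1$ of the non-simplicial part, with simplicial cliques $A_i$ attached to each edge $v_iv_{i+1}$. The goal is then to show that this extended sun must in fact be an \emph{odd} sun living inside $G$.

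The heart of the argument is to control the parity and the simplicial structure. First I would invoke Lemma~\ref{mainlemma} together with Lemma~\ref{l1}: for the simplicial vertex $v$ and any two neighbours $u,w\in N(v)$, the comparability $N(u)\subseteq N(w)$ or $N(w)\subseteq N(u)$ is forbidden by Lemma~\ref{l1}, while the incomparable case forces an induced $3$-sun by Lemma~\ref{mainlemma}. Since every $k$-extended sun contains a $3$-sun (as noted after Definition~\ref{extendedsungraph}), and a $3$-sun is itself an odd sun, I can conclude that $G$ contains an induced $3$-sun, hence an induced odd sun. To make this airtight I must rule out that $G$ is a $3$-sun-free \emph{even} extended sun, for by Lemma~\ref{evensunlemma} such a graph is $S$-perfect, contradicting $S$-imperfectness; and I must rule out that the only induced suns are even, which is exactly what Lemma~\ref{evensunlemma} and Lemma~\ref{oddextendedsun} jointly handle.

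The main obstacle I anticipate is the bookkeeping that converts the extended sun $G^*$ of Lemma~\ref{implemma} back into a genuine induced \emph{sun} (with $|A_i|=1$) of the correct odd parity inside $G$ itself, rather than merely asserting the existence of some $3$-sun. The delicate point is that Lemma~\ref{implemma} may add vertices not in $G$, so I would want to locate the $3$-sun produced by Lemma~\ref{mainlemma} entirely within $V(G)$, using that $v$ is simplicial in $G$ and that the incomparability of neighbourhoods is a property internal to $G$. I expect to combine Lemma~\ref{mainlemma} and Lemma~\ref{l1} to derive a contradiction with either parity: any minimal $S$-imperfect triangulated graph must expose a $3$-sun, which is an odd sun, completing the contrapositive and hence the theorem.
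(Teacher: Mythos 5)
Your proposal is correct and follows essentially the same route as the paper's own proof: the forward direction via heredity and Lemma~\ref{oddsunlemma}, and the converse by reducing to a minimal $S$-imperfect block, invoking the extended-sun embedding of Lemma~\ref{implemma}, and combining Lemma~\ref{l1} with Lemma~\ref{mainlemma} to force an induced $3$-sun (an odd sun), with Lemma~\ref{evensunlemma} disposing of the $3$-sun-free even extended sun case. The concern you flag about locating the $3$-sun inside $V(G)$ is handled the same way the paper handles it, since Lemma~\ref{mainlemma} is stated for $G$ itself.
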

\begin{proof}
\par Let $G$ be a triangulated $S$-perfect graph. Then by Lemma \ref{oddsunlemma} $G$ is odd-sun free.
\par Conversely, let $G$ be an odd sun-free triangulated graph.  If $G$ is not $S$-perfect, without loss of generality, let $G$ be a minimal $S$-imperfect graph. Then by Lemmas \ref{implemma} and \ref{lemma2.9}, there exists an extended sun $G^*$ such that $G$ is an induced subgraph of $G^*$ and $\alpha_S(G^*)\neq \theta_S(G^*)$. We may assume that $G^*$ is minimal $S$-imperfect. But then $G^*=G$, since $G$ is an induced subgraph of $G^*$. By Lemma \ref{l1}, $G$ has no three vertices $u,v,w\in V(G)$ such that $N(u)\subseteq N(w)$ or $N(w)\subseteq N(u)$, where $v$ is simplicial vertex in $G$ and $u,w\in N(v)$. Then $G$ contains 3-sun as an induced subgraph by Lemma \ref{mainlemma}. Since $G=G^*$, $G$ is an extended sun and contains 3-sun as an induced subgraph, a contradiction to our assumption. 
By definition of odd extended sun, it contains odd sun as an induced subgraph. Therefore $G$ is 3-sun free even extended sun. Then by Lemma \ref{evensunlemma}, $G$ is $S$-perfect. This completes the proof of the theorem.
\end{proof}

\section{Future Directions}
The main theorem leads to the following conjecture.
\par Conjecture. A graph is $S$-perfect if and only if $G$ is $\{C_{3k+1}, C_{3k+2}, k\geq 1$, odd super sun\}-free where super sun is defined as follows.
\par Let $H$ be a graph with a hamiltonian cycle $C=v_1v_2\dots v_k v_1$. Let $A_i, 1\leq i\leq k$ be mutually disjoint set of vertices not in $H$ such that $|A_i|\geq 1 $. Let $H^*$ be a graph constructed from $H$ such that 
$H^*$ has the following properties:
\begin{enumerate}
    \item the induced subgraph on $A_i$ is a path $P_i$ in $H^*$
    \item the end vertices of $P_i$, that is $u_{i1}$ and $u_{i|P_i|}$ are respectively adjacent to $v_i$ and $v_{i+1}$ and the number of vertices in $P_i$ is $3k+1, k\geq 1$
\end{enumerate}
Then $H^*$ is called $k$-super sun. $H^*$ is even or odd super sun depending on $k$ being even or odd.
\begin{figure}[h]
    \centering
    \includegraphics[width=0.21\linewidth]{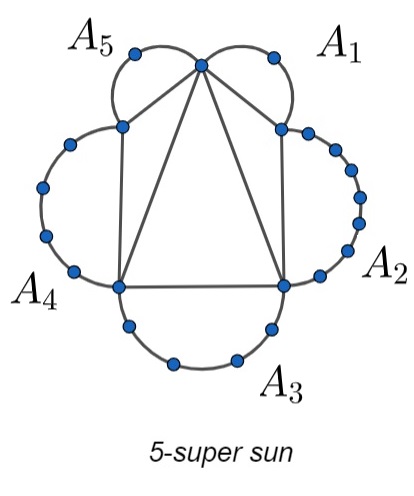}
    \caption{An example of 5-super sun graph}
    \label{}
\end{figure}
\newpage
\section*{Acknowledgement} 
The authors profusely thank S. A. Choudum, CHRIST (Deemed to be University) for helpful discussions and critically looking into the entire manuscript. 

\end{document}